\numberwithin{equation}{section}
\newtheorem{thm}{Theorem}[subsection]
\newtheorem{cor}[thm]{Corollary}
\newtheorem{prop}[thm]{Proposition}
\newtheorem{rem}[thm]{Remark}
\newcommand{\propref}[1]{Proposition~\ref{#1}}
\newcommand{\eqnref}[1]{~(\ref{#1})}
\begin{document}



\title{Imaginary Verma  Modules for $U_q(\widehat{\mathfrak{sl}(2)})$ and Crystal-like bases}
\author{ Ben Cox}
\author{Vyacheslav Futorny}
\author{Kailash C. Misra}
\keywords{Quantum affine algebras,  Imaginary Verma modules, Kashiwara algebras, crystal bases}
\address{Department of Mathematics \\
University of Charleston \\
66 George Street  \\
Charleston SC 29424, USA}\email{coxbl@cofc.edu}
\address{Department of Mathematics\\
 University of S\~ao Paulo\\
 S\~ao Paulo, Brazil}
 \email{futorny@ime.usp.br}
 \address{Department of Mathematics\\
 North Carolina State University\\
 Raleigh, NC 27695-8205, USA}
 \begin{abstract}  We consider imaginary Verma modules for quantum affine algebra
 $U_q(\widehat{\mathfrak{sl}(2)})$  and define a crystal-like base which we call an imaginary crystal basis 
 using  the Kashiwara algebra $\mathcal K_q$ constructed in earlier work of the authors. In particular, we prove the existence of imaginary like bases 
 for a suitable category of reduced imaginary Verma modules for $U_q(\widehat{\mathfrak{sl}(2)})$.
\end{abstract}
\date{}
\thanks{}

\subjclass{Primary 17B37, 17B15; Secondary 17B67, 1769}

\maketitle
\section{Introduction} 

 We consider imaginary Verma modules for quantum affine algebra
 $U_q(\widehat{\mathfrak{sl}(2)})$  and define a crystal-like base which we call an imaginary crystal basis 
 using  the Kashiwara algebra $\mathcal K_q$ constructed in earlier work of the authors. In particular, we prove the existence of imaginary crystal-like bases 
 for a suitable category of reduced imaginary Verma modules for $U_q(\widehat{\mathfrak{sl}(2)})$.

Consider the affine Lie algebra  $\widehat{\mathfrak{g}} = \widehat{\mathfrak{sl}(2)}$  with Cartan subalgebra $\widehat{\mathfrak{h}}$.
Let $\{\alpha_0 , \alpha_1\}$ be the simple roots, $\delta = \alpha_0 + \alpha_1$ the null root and $\Delta$ the set of roots for 
$\widehat{\mathfrak{g}}$ with respect to $\widehat{\mathfrak{h}}$. Then we have a natural (standard) partition of $\Delta = \Delta_+ \cup \Delta_-$ into set of positive and negative roots. Corresponding to this standard partition we have a standard Borel subalgebra from which we induce the standard Verma module. Let $S =  \{ \alpha_1+k\delta \ |\ k\in \mathbb Z \} \cup \{l\delta\ |\ l \in \mathbb Z_{>0} \}$. Then $\Delta = S \cup -S$ is another closed partition of the root system $\Delta$ which is not Weyl group conjugate to the standard partition. The classification of closed partitions of the root system for affine Lie algebras was obtained by Jakobsen and Kac \cite{JK,MR89m:17032}, and independently by Futorny \cite{MR1078876,MR1175820}. In fact for affine Lie algebras there exists a finite number ($\geq 2$) of inequivalent Weyl group orbits of closed partitions. For the affine Lie algebra 
$\widehat{\mathfrak{g}}$ the partition $\Delta = S \cup -S$ is the only nonstandard closed partition which gives rise to a nonstandard Borel subalgebra. The Verma module $M(\lambda)$ with highest weight $\lambda$ induced by this nonstandard Borel subalgebra is called the imaginary Verma module for $\widehat{\mathfrak{g}}$. Unlike the standard Verma module, the imaginary Verma module $M(\lambda)$ contain both finite and infinite dimensional weight spaces.
 

For generic $q$, consider the associated quantum affine algebra $U_q(\widehat{\mathfrak{g}})$  (\cite{MR802128}, \cite{MR797001}). Lusztig \cite{MR954661} proved that the integrable highest weight modules of  $\widehat{\mathfrak{g}}$ can be deformed to those over $U_q(\widehat{\mathfrak{g}})$ in such a way that the dimensions of the weight spaces are invariant under the deformation. 
Following the framework of \cite{MR954661} and \cite{MR1341758}, it was shown in  (\cite {MR97k:17014}, \cite{MR1662112}) that the imaginary Verma modules $M(\lambda)$ can also be $q$-deformed to the quantum imaginary Verma modules $M_q(\lambda)$ in such a way that the weight multiplicities, both finite and infinite-dimensional, are preserved.


Lusztig \cite{MR1035415} from a geometric view point and Kashiwara \cite{MR1115118} from an algebraic view point 
introduced the notion of canonical bases (equivalently, global crystal bases) for standard Verma modules
$V_q(\lambda)$ and integrable highest weight modules $L_q(\lambda)$. The crystal base (\cite{MR1090425, MR1115118}) can be thought of as the $q=0$ limit of the global crystal base or canonical base. An important ingredient in the construction of crystal base by Kashiwara in
 \cite{MR1115118}, is a subalgebra $\mathcal {B}_q$ of the quantum group which acts on the negative part of the quantum group  
 by left multiplication. This subalgebra $\mathcal {B}_q$, which we call the Kashiwara algebra, played an important role in the definition of the Kashiwara operators which defines the crystal base. In \cite{CFM10} we constructed an analog of Kashiwara algebra, denoted by $\mathcal K_q$ for the imaginary Verma module $M_q(\lambda)$ for the quantum affine algebra $U_q(\widehat{\mathfrak{g}})$ by introducing certain Kashiwara-type operators. Then we proved that a certain quotient $\mathcal N_q^-$ of  $U_q(\widehat{\mathfrak{g}})$ is a simple $\mathcal K_q$-module and gave a necessary and sufficient condition for a particular quotient  $\tilde{M}_q(\lambda)$ (called reduced imaginary Verma module) of  $M_q(\lambda)$ to be simple. These results were generalized to any affine Lie algebra of $ADE$ type in \cite{CFM14}.


In this paper we consider a category $\mathcal O^q_{\text{red,im}}$ of $U_q(\widehat{\mathfrak{g}})$-modules and define a crystal-like basis which we call imaginary crystal basis for modules in this category. We show that the reduced imaginary Verma modules $\tilde{M}_q(\lambda)$ are in $\mathcal O^q_{\text{red,im}}$. Then we show that any module in $\mathcal O^q_{\text{red,im}}$ is a direct sum of reduced imaginary Verma modules for $U_q(\widehat{\mathfrak{g}})$. Finally we prove the existence of imaginary crystal basis for the reduced imaginary Verma module $\tilde{M}_q(\lambda)$.

The paper is organized as follows.
In Sections 2 we recall necessary definitions and properties about the algebra $U_q(\widehat{\mathfrak{g}})$ that we need. In Section 3 we recall the definitions and relations of $\Omega$-operators defined in  \cite{CFM10}. In Section 4, we recall the definition of the Kashiwara algebra $\mathcal K_q$ and the symmetric bilinear form  $( \, , \, )$ on the simple $\mathcal K_q$-module $\mathcal N_q^-$ from \cite{CFM10} and show that this form satisfies certain orthonormality condition modulo $q^2$ and is non-degenerate. In Section 5 we recall the definitions and properties of imaginary Verma modules $M(\lambda)$ for the affine Lie algebra $\widehat{\mathfrak{g}}$ and the reduced imaginary Verma modules $\tilde{M}(\lambda)$. In Section 6 we define the category $\mathcal O_{\text{red,im}}$ of $\widehat{\mathfrak{g}}$-modules and show that this category is a Serre category and any module in this category is a direct sum of some simple reduced imaginary Verma modules. In Section 7 we recall some basic results about quantized imaginary Verma modules and reduced quantized imaginary Verma modules for $U_q(\widehat{\mathfrak{g}})$. In Section 8 we define the category $\mathcal O^q_{\text{red,im}}$ of $U_q(\widehat{\mathfrak{g}})$-modules containing the reduced quantized imaginary modules  $\tilde{M}_q(\lambda)$ and define Kashiwara type operators $\tilde\Omega_\psi(m)$ and $\tilde x_m^-$ on $\tilde M_q(\lambda)$. In Section 9 we define the imaginary crystal basis for any module $M \in \mathcal O^q_{\text{red,im}}$ and prove the existence of an imaginary crystal basis for any reduced quantized Verma module $\tilde M_q(\lambda)$.

 \section{Notation}
 
\subsection{}  Let $\mathbb{F}$ denote a field of characteristic zero. The {\it quantum group}
$U_q(A_1^{(1)})$ is the $\mathbb F(q^{1/2})$-algebra with 1 generated by
$$ e_0, e_1, f_0, f_1, K_0^{\pm 1}, K_1^{\pm 1}, D^{\pm 1} $$
with defining relations:
\begin{align*}& DD^{-1}=D^{-1}D=K_iK_i^{-1}=K_i^{-1}K_i=1, \\
& e_if_j-f_je_i = \delta_{ij}\frac{K_i-K_i^{-1}}{q-q^{-1}}, \\
& K_ie_iK_i^{-1}=q^2e_i, \ \ K_i f_i K_i^{-1} =q^{-2}f_i, \\
& K_i e_jK_i^{-1} = q^{-2}e_j, \ \
K_i f_jK_i^{-1} = q^2f_j, \quad i\neq j, \\
& K_iK_j-K_jK_i = 0, \ \ K_iD-DK_i=0, \\
& De_iD^{-1}=q^{\delta_{i,0}} e_i, \ \
Df_iD^{-1}=q^{-\delta_{i,0}} f_i, \\
& e_i^3e_j-[3]e_i^2e_je_i+[3]e_ie_je_i^2-e_je_i^3 =0, \quad i\neq j, \\
& f_i^3f_j-[3]f_i^2f_jf_i+[3]f_if_jf_i^2-f_jf_i^3 = 0, \quad i\neq j, \\
\end{align*}where, $[n] = \frac{q^n-q^{-n}}{q-q^{-1}}$.

The quantum group $U_q(A_1^{(1)})$
can be given a Hopf algebra structure with a comultiplication given by
\begin{align*}
\Delta(K_i) &= K_i \otimes K_i, \\
\Delta(D)&=D\otimes D, \\
\Delta(e_i) &= e_i\otimes K_i^{-1} + 1\otimes e_i, \\
\Delta(f_i) &= f_i\otimes 1 + K_i \otimes f_i, \\
\end{align*}and an antipode given by
\begin{align*}
s(e_i) &=-e_iK_i^{-1}, \\
s(f_i) &= -K_if_i, \\
s(K_i) &= K_i^{-1},\\
s(D) &= D^{-1}.
\end{align*}
 There is an alternative realization for $U_q(A_1^{(1)})$,
due to Drinfeld
\cite{MR802128}, which we shall also need.  Let
$U_q$ be the associative algebra with $1$ over $\mathbb F(q^{1/2})$
generated by the
elements $x^{\pm }_k$ ($k\in \mathbb Z$), $h_l$ ($l \in \mathbb Z
\setminus \{0\}$), $K^{\pm 1}$,
$D^{\pm 1}$, and $\gamma^{\pm \frac12}$ with the following defining
relations:
\begin{align}
DD^{-1}=D^{-1}D&=KK^{-1}=K^{-1}K=1,  \\
[\gamma^{\pm \frac 12},u] &= 0 \quad \forall u \in U, \\
[h_k,h_l] &= \delta_{k+l,0} \frac{[2k]}{k} \frac{\gamma^k -
\gamma^{-k}}{q-q^{-1}},  \\
[h_k,K]&=0,\quad [D,K]=0,  \\
Dh_kD^{-1}&=q^k h_k,  \\
Dx^{\pm}_kD^{-1}&=q^{ k}x^{\pm}_k,  \\
Kx^{\pm}_kK^{-1} &= q^{\pm 2}x^{\pm}_k,   \\
[h_k,x^{\pm}_l]&= \pm \frac{[2k]}{k}\gamma^{\mp \frac{|k|}{2}}x^{\pm}_{k+l}, \label{axcommutator}  \\
    x^{\pm}_{k+1}x^{\pm}_l &- q^{\pm 2}
x^{\pm}_lx^{\pm}_{k+1}\label{Serre}   \\
&= q^{\pm 2}x^{\pm}_kx^{\pm}_{l+1}
    - x^{\pm}_{l+1}x^{\pm}_k,\notag \\
[x^+_k,x^-_l]&=
    \frac{1}{q-q^{-1}}\left( \gamma^{\frac{k-l}{2}}\psi(k+l) -
    \gamma^{\frac{l-k}{2}}\phi(k+l)\right), \label{xcommutator}   \\
\text{where  }
\sum_{k=0}^{\infty}\psi(k)z^{-k} &= K \exp\left(
(q-q^{-1})\sum_{k=1}^{\infty} h_kz^{-k}\right),\\
\sum_{k=0}^{\infty}
\phi(-k)z^k&= K^{-1} \exp\left( - (q-q^{-1})\sum_{k=1}^{\infty}
h_{-k}z^k\right).
\end{align}
The algebras $U_q(A_1^{(1)})$
and $U_q$ are isomorphic \cite{MR802128}. The action of the
isomorphism,
which we shall call the {\it Drinfeld Isomorphism,} on the generators of
$U_q(A_1^{(1)})$ is given by:
\begin{align*}e_0 &\mapsto x^-_1K^{-1}, \ \ f_0 \mapsto Kx^+_{-1}, \\
e_1 &\mapsto x^+_0, \ \ f_1 \mapsto x^-_0, \\
K_0 &\mapsto \gamma K^{-1}, \ \ K_1 \mapsto K, \ \ D \mapsto D.
\end{align*}
If one uses the formal sums
\begin{equation}
\phi(u)=\sum_{p\in\mathbb Z} \phi(p)u^{-p},\enspace \psi(u)=\sum_{p\in\mathbb Z}\psi(p)u^{-p},\enspace
x^{\pm }(u)=\sum_{p\in\mathbb Z} x^\pm(p)u^{-p}
\end{equation}
Drinfeld's relations (3), (8)-(10) can be written as
\begin{gather}
[\phi(u),\phi(v)]=0=[\psi(u),\psi(v)] \\
\phi(u)x^\pm (v)\phi(u)^{-1}=g(uv^{-1}\gamma^{\mp 1/2})^{\pm 1}x^\pm (v)\label{phix} \\
\psi(u)x^\pm (v)\psi(u)^{-1}=g(vu^{-1}\gamma^{\mp 1/2})^{\mp 1}x^\pm (v)\label{psix} \\
(u-q^{\pm2} v)x^\pm (u)x^\pm (v)=(q^{\pm 2}u-v)x^\pm(v)x^\pm(u) \\
[x^+(u),x^-(v)]=(q-q^{-1})^{-1}(\delta(u/v\gamma)\psi(v\gamma^{1/2})-\delta(u\gamma/v)\phi(u\gamma^{1/2}))\label{xx}
\end{gather}
where
$g(t)=g_q(t)=\sum_{k\geq 0}g(r)t^{k}$ is the Taylor series at $t=0$ of the function $(q^2t-1)/(t-q^2)$ and $\delta(z)=\sum_{k\in\mathbb Z}z^{k}$ is the formal Dirac delta function.
\begin{rem}
Writing $g(t)=g_q(t)=\sum_{r\geq 0}g(r)t^r$ we have
\begin{equation}\label{grcomp}
g(r)=g_q(r)=g_{q^{-1}}(r)=\begin{cases} q^{2}&\text{if}\quad r=0 \\ 
(1-q^{-4})q^{2(r+1)}=(q^{4}-1)q^{2(r-1)},&\text{if}\quad r>0.
\end{cases}
\end{equation}
\end{rem}

Considering Serre's relation  with $k=l$, we get
\begin{equation}\label{Serre1}
x^{-}_kx^{-}_{k+1}=q^2x^{-}_{k+1}x^{-}_k  
\end{equation}
The product on the right side is in the correct order for a basis  element.
If $k+1>l$ and $k\neq l$ in \eqnref{Serre}, then $k+1>  l+1$ so that $k\geq l+1$, and thus we can write
\begin{equation}\label{Serre2}
 x^{-}_lx^{-}_{k+1}=q^2x^{-}_{k+1}x^{-}_l   - x^{-}_kx^{-}_{l+1} +q^2 x^{-}_{l+1}x^{-}_k
\end{equation}
and then after repeating the above identity, we will eventually arrive at sums of terms that are in the correct order.    This is the opposite ordering of monomials as we had previously.

\section{$\Omega$-operators and their relations}

Let $\mathbb N^{\mathbb N^*}$ denote the set of all functions from $\{k\delta\,|\, k\in\mathbb N^*\}$ to $\mathbb N$ with finite support.   Then we can write
$$
h^+=h^{(s_k)}_+:=h_{r_1}^{s_1}\cdots h_{r_l}^{s_l},\quad h^-:=h^{(s_k)}_-=
h_{-r_1}^{s_1}\cdots h_{-r_l}^{s_l}
$$
for $f=(s_k)\in\mathbb N^{\mathbb N^*}$ whereby $f(r_k)=s_k$ and $f(t)=0$ for $t\neq r_i, 1\leq i\leq  l$.

Consider now the subalgebra $ \mathcal N_q^-$, generated by $\gamma^{\pm1/2}$, and $x^-_l$, $l\in\mathbb Z$. Note that the corresponding relations (9) hold in $ \mathcal N_q^-$.  Consider $x^-(v)=\sum_mx^-_mv^{-m}$ as a formal power series of left multiplication operators $x^-_m:\mathcal N_q^-\to \mathcal N_q^-$. 


As in our previous paper we set 
\begin{align*}
\bar P&=x^-(v_1)\cdots x^-(v_k) \\
\bar P_l&=x^-(v_{1})\cdots x^-(v_{l-1})x^-(v_{l+1})\cdots x^-(v_k),
\end{align*}
and
$$
G_l=G_l^{1/q}:=\prod_{j=1}^{l-1}g_{q^{-1}}(v_j/v_l),\quad G_l^{q}=\prod_{j=1}^{l-1}g(v_l /v_j)
$$
where $G_1:=1$.
As in our previous work we define a collection of operators $\Omega_\psi(k),\Omega_\phi(k):\mathcal N_q^-\to \mathcal N_q^-$, $k\in\mathbb Z$, in terms of the generating functions
$$
\Omega_\psi(u)=\sum_{l\in\mathbb Z}\Omega_\psi(l)u^{-l},\quad \Omega_\phi(u)=\sum_{l\in\mathbb Z}\Omega_\phi(l)u^{-l}
$$
by setting 
\begin{align}\label{definingomegapsi}
\Omega_\psi(u)(\bar P):&=\gamma^{m} \sum_{l=1}^kG_l
  \bar P_l \delta(u/v_l\gamma) \\
   \Omega_\phi(u)(\bar P):&=\gamma^{m}\sum_{l=1}^kG_l^q  \bar P_l\delta(u\gamma/v_l).\label{definingomegaphi}
\end{align}
%
%
%
Note that $\Omega_{\psi}(u)(1)=\Omega_{\phi}(u)(1)=0$.  More generally let us write
$$
\bar P=x^-(v_1)\cdots x^-(v_k)=\sum_{n\in\mathbb Z}\sum_{n_1,n_2,\dots,n_k\in\mathbb Z\atop n_1+\cdots +n_k=n}x^-_{n_1}\cdots x^-_{n_k}v_1^{-n_1}\cdots v_k^{-n_k}
$$
Then
\begin{align*}
\psi&(u\gamma^{-1/2})\Omega_\psi(u)(\bar P)\\
&=\sum_{k\geq 0}\sum_{p\in\mathbb Z}\sum_{n_i\in\mathbb Z } \gamma^{k/2}\psi(k)\Omega_\psi(p)(x^-_{n_1}\cdots x^-_{n_k})v_1^{-n_1}\cdots v_k^{-n_k}u^{-k-p}  \\
&=\sum_{n_i\in\mathbb Z } \sum_{m\in\mathbb Z}\sum_{k\geq 0}\gamma^{k/2}\psi(k)\Omega_\psi(m-k)(x^-_{n_1}\cdots x^-_{n_k})v_1^{-n_1}\cdots v_k^{-n_k}u^{-m}
\end{align*}
while
\begin{equation*}
[x^+(u),\bar P]=\sum_{m\in\mathbb Z}\sum_{n_1,n_2,\dots,n_k\in\mathbb Z } [x^+_m,x^-_{n_1}\cdots x^-_{n_k}]v_1^{-n_1}\cdots v_k^{-n_k}u^{-m}.
\end{equation*}
Thus for a fixed $m$ and $k$-tuple $(n_1,\dots,n_k)$ the sum
$$
\sum_{k\geq 0}\gamma^{k/2}\psi(k)\Omega_\psi(m-k)(x^-_{n_1}\cdots x^-_{n_k})
$$
must be finite.  Hence
\begin{equation}\label{omegalocalfin}
\Omega_\psi(m-k)(x^-_{n_1}\cdots x^-_{n_k})=0,
\end{equation}
 for $k$ sufficiently large.

\begin{prop} \label{commutatorprop}  Then
\begin{align}
\Omega_\psi(u)x^-(v)&=\delta(v\gamma/u)+g_{q^{-1}}(v\gamma/u)x^-(v)\Omega_\psi(u),
\label{omegapsi}\\
  \Omega_\phi(u)x^-(v)&=\delta(u\gamma/v)+g(u\gamma/v)x^-(v)\Omega_\phi(u)\label{omegaphi}  \\
(q^2u_1-u_2)\Omega_\psi(u_1)\Omega_\psi(u_2)&=(u_1-q^2u_2)\Omega_\psi(u_2)\Omega_\psi(u_1) \label{psipsi} \\
(q^2u_1-u_2)\Omega_\phi(u_1)\Omega_\phi(u_2)&=(u_1-q^2u_2)\Omega_\phi(u_2)\Omega_\phi(u_1)   \label{phiphi} \\
(q^2\gamma^2u_1-u_2)\Omega_\phi(u_1)\Omega_\psi(u_2)&=(\gamma^2u_1-q^2u_2)\Omega_\psi(u_2)\Omega_\phi(u_1)\label{omegaphipsi}
\end{align}
\end{prop}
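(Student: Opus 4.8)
The plan is to verify all five identities directly, as identities of operators on $\mathcal N_q^-$, by evaluating both sides on a general product $\bar P=x^-(v_1)\cdots x^-(v_k)$ and comparing coefficients. Everything rests on three routine ingredients: the substitution rule $f(u)\delta(u/w)=f(w)\delta(u/w)$ for the formal delta (legitimate for all the power series $f$ that occur, and the infinite sums converge by the local finiteness \eqnref{omegalocalfin}); the explicit expansion \eqnref{grcomp}, which shows $g_q=g_{q^{-1}}=:g$ is the Taylor series of $(q^2-t)/(1-q^2t)$, so that $(q^2t-1)g(t)=t-q^2$ and $g(t)g(t^{-1})=1$; and the multiplicativity (``cocycle'') of the weights $G_l$ under adjoining a variable,
\begin{equation*}
G_l^{1/q}(v,v_1,\dots,v_k)=g_{q^{-1}}(v/v_l)\,G_l^{1/q}(v_1,\dots,v_k),\qquad
G_l^{q}(v,v_1,\dots,v_k)=g(v_l/v)\,G_l^{q}(v_1,\dots,v_k),
\end{equation*}
which is immediate from the definitions of $G_l^{1/q}$ and $G_l^q$.

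For \eqnref{omegapsi} (and \eqnref{omegaphi}) I would apply the left-hand side to $\bar P$: by \eqnref{definingomegapsi} this is $\Omega_\psi(u)$ evaluated on the $(k+1)$-fold product $x^-(v)x^-(v_1)\cdots x^-(v_k)$. The summand in which the new first factor is extracted has weight $G_1^{1/q}=1$ and contributes $\bar P\,\delta(u/v\gamma)=\delta(v\gamma/u)\,\bar P$, the first term on the right. In each remaining summand some $v_l$ is extracted; by the cocycle identity its weight is $g_{q^{-1}}(v/v_l)\,G_l^{1/q}(v_1,\dots,v_k)$, and since the attached delta $\delta(u/v_l\gamma)$ forces the substitution $v_l\mapsto u/\gamma$, we may replace $g_{q^{-1}}(v/v_l)$ by $g_{q^{-1}}(v\gamma/u)$; collecting these summands yields exactly $g_{q^{-1}}(v\gamma/u)\,x^-(v)\,\Omega_\psi(u)\bar P$. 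The proof of \eqnref{omegaphi} is identical, with $G^q$, $\delta(u\gamma/v_l)$ and $g$ replacing $G^{1/q}$, $\delta(u/v_l\gamma)$ and $g_{q^{-1}}$. (These two relations are the ones already recorded in \cite{CFM10}.)

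The three quadratic relations \eqnref{psipsi}, \eqnref{phiphi}, \eqnref{omegaphipsi} can be obtained in the same spirit, but it is cleanest to bootstrap from \eqnref{omegapsi}--\eqnref{omegaphi}. For \eqnref{psipsi}: both sides annihilate $1$ (since $\Omega_\psi(u)(1)=0$), so by induction on the number of factors of $\bar P$ it suffices to compare their action on $x^-(v)\bar P\cdot 1$; straightening $x^-(v)$ to the left through the two copies of $\Omega_\psi$ via \eqnref{omegapsi} writes each of $(q^2u_1-u_2)\Omega_\psi(u_1)\Omega_\psi(u_2)x^-(v)$ and $(u_1-q^2u_2)\Omega_\psi(u_2)\Omega_\psi(u_1)x^-(v)$ as $g_{q^{-1}}(v\gamma/u_1)g_{q^{-1}}(v\gamma/u_2)\,x^-(v)$ times the corresponding operator (these agree by the inductive hypothesis) plus ``lower'' terms with a single $\Omega_\psi$ and one delta. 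Matching those lower terms, and evaluating the $g_{q^{-1}}$'s on the support of the deltas, leaves precisely the scalar identities $(q^2u_1-u_2)g_{q^{-1}}(u_1/u_2)=u_1-q^2u_2$ and $(q^2u_1-u_2)=(u_1-q^2u_2)g_{q^{-1}}(u_2/u_1)$, which are instances of $(q^2t-1)g(t)=t-q^2$ and hence of the very relations \eqnref{Serre} / \eqnref{phix} from which $g$ was born. The relations \eqnref{phiphi} and \eqnref{omegaphipsi} go the same way, straightening with \eqnref{omegaphi} (and, for \eqnref{omegaphipsi}, with both \eqnref{omegapsi} and \eqnref{omegaphi}); in \eqnref{omegaphipsi} the factor $\gamma^2$ appears because the deltas $\delta(u_1\gamma/v_a)$ and $\delta(u_2/v_b\gamma)$ pin the ratio $v_a/v_b$ to $u_1\gamma^2/u_2$. (Alternatively one can evaluate both sides of each quadratic relation directly on $\bar P$: each side becomes a double sum over the two extracted variables, and for each ordered pair the two coefficients differ only by a ratio of $G$-weights, which on the support of the deltas is again governed by $(q^2t-1)g(t)=t-q^2$.)

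I expect the only genuine work to be the formal-delta bookkeeping: one must be scrupulous about which power series are being substituted into which delta (all the $g$'s, $g_{q^{-1}}$'s and $G_l$'s are honest series in the relevant ratios, so this is legitimate, but the order matters), and about the powers of $\gamma$ produced by the two compositions $\Omega(u_1)\Omega(u_2)$ and $\Omega(u_2)\Omega(u_1)$ — including those hidden inside $\delta(u\gamma/v_l)$ and $\delta(u/v_l\gamma)$ — which must be checked to agree on the two sides before being cancelled. Once $\gamma$ and the $G$-cocycle are tracked correctly, each of the five identities collapses to the single rational-function identity $(q^2t-1)g(t)=t-q^2$.
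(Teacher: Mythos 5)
The paper itself gives no proof of \propref{commutatorprop}: the operators and all five identities are recalled verbatim from \cite{CFM10} (``As in our previous work we define\dots''), so there is nothing internal to compare against. Judged on its own terms, your architecture is the right one and is surely how these relations are established: evaluate on a generating product $\bar P$, use the multiplicativity $G_{l}^{1/q}(v,v_1,\dots,v_k)=g_{q^{-1}}(v/v_l)G_l^{1/q}(v_1,\dots,v_k)$ of the weights when a new variable is adjoined, use $f(z)\delta(z/w)=f(w)\delta(z/w)$ to evaluate the series on the support of each delta, and then bootstrap the quadratic relations from \eqnref{omegapsi}--\eqnref{omegaphi} by induction on the length of $\bar P$. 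Your verification of \eqnref{omegapsi} and of \eqnref{psipsi} is correct as written; in particular the two scalar checks for \eqnref{psipsi} do both reduce to $(q^2t-1)g_{q^{-1}}(t)=t-q^2$, which holds for the series with coefficients \eqnref{grcomp}.

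The one place your reduction does not close is the claim that \emph{all} five identities collapse to that single relation, which rests on taking the remark's assertion $g_q=g_{q^{-1}}$ at face value. Running your own bootstrap on \eqnref{phiphi} (straightening $x^-(v)$ with \eqnref{omegaphi}) produces the requirement $(q^2u_1-u_2)\,g_q(u_2/u_1)=u_1-q^2u_2$, i.e.\ $(t-q^2)g_q(t)=q^2t-1$ --- the \emph{reciprocal} relation; and \eqnref{omegaphipsi} needs one coefficient of each kind, so it forces $g_q(t)\,g_{q^{-1}}(t)=1$. This is consistent with the in-text definitions ($g_q$ the expansion of $(q^2t-1)/(t-q^2)$, hence $g_{q^{-1}}$ the expansion of its reciprocal $(q^2-t)/(1-q^2t)$) but not with the remark's $g_q(r)=g_{q^{-1}}(r)$, whose displayed coefficients \eqnref{grcomp} match $g_{q^{-1}}$ only; the paper is internally inconsistent here and \eqnref{omegaphipsi2} inherits the same ambiguity. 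So the gap is small but real: if you literally set $g_q=g_{q^{-1}}=(q^2-t)/(1-q^2t)$ as you propose, the final scalar check fails for \eqnref{phiphi} and for half of \eqnref{omegaphipsi} by a factor of $g^2$. The fix is only a matter of keeping $g_q$ and $g_{q^{-1}}$ as mutual reciprocals throughout; everything else in your argument, including the $\gamma$-bookkeeping you flag, goes through.
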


The identities in \propref{commutatorprop} can be rewritten as
\begin{align}
(q^2v\gamma- u)\Omega_\psi(u)x^-(v)&=(q^2v\gamma- u)\delta(v\gamma/u)+(q^2v\gamma -u)x^-(v)\Omega_\psi(u),
\label{omegapsi2}\\
(q^2v- u\gamma)  \Omega_\phi(u)x^-(v)&=(q^2v- u\gamma)\delta(v/u\gamma)+( v-q^2u\gamma)x^-(v)\Omega_\phi(u)\label{omegaphi3}
\end{align}
which may be written out in terms of components as
\begin{align}
&q^2\gamma\Omega_\psi(m)x^-(n+1)- \Omega_\psi(m+1)x^-_n\label{omegapsi4} \\
&\quad =(q^2\gamma-1)\delta_{m,-n-1}+ \gamma x^-_{n+1}\Omega_\psi(m)-q^2x^-_n\Omega_\psi(m+1),
\notag \\
&  q^2\Omega_\phi(m)x^-(n+1)-  \gamma\Omega_\phi(m+1)x^-(n) \label{omegaphi5} \\
&\qquad =(q^2- \gamma)\delta_{m,-n-1}+ x^-(n+1)\Omega_\psi(m)-q^2\gamma x^-(n)\Omega_\psi(m+1).\notag 
\end{align}
%

  We also have by \eqnref{omegaphipsi}
  \begin{equation}\label{omegaphipsi2}
    \Omega_\psi(k)\Omega_\phi(m)= \sum_{r\geq 0}g (r)\gamma^{2r}\Omega_\phi(r+m)\Omega_\psi(k-r),
\end{equation}
as operators on $\mathcal N_q^-$.

  We can also write \eqnref{omegapsi} in terms of components and as operators on $\mathcal N_q^-$
\begin{equation}\label{omegapsi6}
    \Omega_\psi(k)x^-(m)=\delta_{k,-m}\gamma^{k}+\sum_{r\geq 0}g_{q^{-1}}(r)x^-(m+r)\Omega_\psi(k-r)\gamma^{r}.
\end{equation}
    The sum on the right hand side turns into a finite sum when applied to an element in $\mathcal N_q^-$, due to \eqnref{omegalocalfin}.

\section{The Kashiwara algebra $\mathcal K_q$}  

The Kashiwara algebra $\mathcal K_q$ is defined to be the $\mathbb F(q^{1/2})$-subalgebra of $\text{End}\,( \mathcal N_q)$ generated by $\Omega_\psi(m),x^-_n,\gamma^{\pm 1/2}$, $m,n\in\mathbb Z$, $\gamma^{\pm 1/2}$.  Then the $\gamma^{\pm 1/2}$ are central and the following relations (which are implied by \eqnref{omegapsi6}) are satisfied
\begin{align}
q^2\gamma\Omega_\psi(m)&x^-_{n+1}-  \Omega_\psi(m+1)x^-_n \\
&=(q^2\gamma-1)\delta_{m,-n-1}+ \gamma x^-_{n+1}\Omega_\psi(m)-q^2x^-_n\Omega_\psi(m+1) \notag \\\notag \\
q^2 \Omega_\psi(k+1)&\Omega_\psi(l) -
\Omega_\psi(l)\Omega_\psi(k+1)  =  \Omega_\psi(k)\Omega_\psi(l+1)
    - q^2\Omega_\psi(l+1)\Omega_\psi(k)\label{omegapsi3}
\end{align}
\begin{equation}
 x^{-}_lx^{-}_{k+1}-q^2x^{-}_{k+1}x^{-}_l  =q^2 x^{-}_{l+1}x^{-}_k - x^{-}_kx^{-}_{l+1} \label{xminusreln}
\end{equation}
 together with
\[
\gamma^{1/2}\gamma^{-1/2}=1=\gamma^{-1/2}\gamma^{1/2}.
\]

\begin{prop}\cite{CFM10} \label{form}
There is a unique symmetric bilinear form $(\enspace, \enspace)$ defined on $\mathcal N^-_q$ satisfying
$$
(x^-_ma,b)=(a,\Omega_\psi(-m)b),\quad (1,1)=1.
$$
\end{prop}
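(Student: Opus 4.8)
\emph{Setup and uniqueness.} I would grade $\mathcal N_q^-=\bigoplus_{n\ge0}\mathcal N_{q,n}^-$ by the number of factors $x^-$, so that $\mathcal N_{q,0}^-=\mathbb F(q^{1/2})[\gamma^{\pm1/2}]$, each $x^-_m$ raises and each $\Omega_\psi(k)$ lowers the degree by one (see \eqref{definingomegapsi}), $\gamma^{\pm1/2}$ is central of degree $0$, $\Omega_\psi(k)(1)=0$, and $\mathcal N_{q,n}^-=\sum_{m\in\mathbb Z}x^-_m\,\mathcal N_{q,n-1}^-$ for $n\ge1$ (by centrality of $\gamma$); moreover $\Omega_\psi(-m)v$ is always a \emph{finite} sum, by \eqref{omegalocalfin}. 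The form is then \emph{defined} by recursion on $n$: let it be $\mathbb F(q^{1/2})[\gamma^{\pm1/2}]$-bilinear with $(1,1)=1$ on $\mathcal N_{q,0}^-$, and for $n\ge1$, writing $u=\sum_i x^-_{m_i}a_i$ with $a_i\in\mathcal N_{q,n-1}^-$, set $(u,v):=\sum_i(a_i,\Omega_\psi(-m_i)v)$, the right-hand side being already defined in degree $n-1$. Choosing the one-term expression $u=x^-_m\cdot a$ shows $(x^-_ma,b)=(a,\Omega_\psi(-m)b)$ holds by construction, \emph{once the recursion is known to be independent of the chosen expression of $u$}. Uniqueness is then immediate: iterating $(x^-_ma,b)=(a,\Omega_\psi(-m)b)$ together with $\gamma$-bilinearity peels all the $x^-$'s off the left-hand argument and reduces any $(a,b)$ to an $\mathbb F(q^{1/2})[\gamma^{\pm1/2}]$-linear combination of values $(1,c)$, $c\in\mathcal N_{q,0}^-$, all fixed by $(1,1)=1$.

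\emph{Well-definedness --- the main point.} I must show that $\sum_i x^-_{m_i}a_i=0$ in $\mathcal N_{q,n}^-$ forces $\sum_i(a_i,\Omega_\psi(-m_i)v)=0$ for every $v$, and I would carry this together with the previous two claims in one induction on $n$, the case $n=0$ being trivial. Over the ground ring $\mathbb F(q^{1/2})[\gamma^{\pm1/2}]$ the algebra $\mathcal N_q^-$ has the standard presentation by the quadratic quantum Serre relations \eqref{Serre} among the $x^-_m$ (whose straightening is recorded in \eqref{Serre2}), so the kernel of $\bigoplus_m x^-_m\otimes\mathcal N_{q,n-1}^-\twoheadrightarrow\mathcal N_{q,n}^-$ is spanned by the Serre combinations
\[
x^-_l\otimes(x^-_{k+1}w)-q^2x^-_{k+1}\otimes(x^-_lw)-q^2x^-_{l+1}\otimes(x^-_kw)+x^-_k\otimes(x^-_{l+1}w),\qquad w\in\mathcal N_{q,n-2}^-,
\]
and it suffices to annihilate each of these. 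Applying the adjunction in degree $n-1$ (part of the inductive hypothesis) to each of its four terms converts $\sum_i(a_i,\Omega_\psi(-m_i)v)$ into $(w,Tv)$, where
\[
T=\Omega_\psi(-k-1)\Omega_\psi(-l)-q^2\Omega_\psi(-l)\Omega_\psi(-k-1)-q^2\Omega_\psi(-k)\Omega_\psi(-l-1)+\Omega_\psi(-l-1)\Omega_\psi(-k);
\]
and $T=0$, since this is precisely the Serre relation \eqref{omegapsi3} satisfied by the $\Omega_\psi(k)$ (equivalently the component form of \eqref{psipsi}), which holds by \propref{commutatorprop}. Hence $(w,Tv)=0$, as required.

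\emph{Symmetry and the main obstacle.} It remains to prove $(u,v)=(v,u)$. Assuming this in degrees $<n$ and writing $u=\sum_i x^-_{m_i}a_i$, $v=\sum_j x^-_{n_j}b_j$, I would expand $(u,v)=\sum_{i,j}\bigl(a_i,\Omega_\psi(-m_i)(x^-_{n_j}b_j)\bigr)$ by \eqref{omegapsi6} and apply the adjunction and symmetry in degree $n-1$ term by term to obtain
\[
(u,v)=\sum_{i,j}\delta_{m_i,n_j}\,\gamma^{-m_i}(a_i,b_j)+\sum_{i,j}\sum_{r\ge0}g_{q^{-1}}(r)\,\gamma^{r}\bigl(\Omega_\psi(-m_i-r)b_j,\ \Omega_\psi(-n_j-r)a_i\bigr);
\]
the same computation for $(v,u)$ yields the expression obtained by interchanging $(a_i,m_i)\leftrightarrow(b_j,n_j)$ throughout, whose first sum agrees with the above by symmetry in degree $n-1$ and whose second sum agrees by symmetry in degree $n-2$, so $(u,v)=(v,u)$. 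The one genuinely delicate point is the well-definedness of this one-sided recursion --- its compatibility with the Serre relations among the $x^-_m$. This is the precise analogue of the step in Kashiwara's construction where one needs the skew-derivations $e_i'$ to satisfy the Serre relations, and here it is furnished at once by the dual Serre relation \eqref{omegapsi3} of \propref{commutatorprop}.
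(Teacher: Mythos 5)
The paper offers no proof of this proposition here --- it is quoted from \cite{CFM10} --- so there is nothing in this text to compare against line by line. Your argument is the standard Kashiwara-type construction and is essentially correct: the recursion on the number of $x^-$-factors, the reduction of well-definedness to the dual Serre relation \eqref{omegapsi3} (your operator $T$ does vanish by the component form of \eqref{psipsi}), and the symmetry induction via \eqref{omegapsi6} are exactly the ingredients one expects, and this is surely the argument of \cite{CFM10}. Two small points you lean on silently: that $\mathcal N_q^-$ is presented by the quadratic relations \eqref{Serre} alone (equivalently, the PBW basis of ordered monomials), which the paper also assumes throughout; and that the base of the recursion must set $(1,w)=0$ for $w$ of positive degree (forced by symmetry, since $(1,x^-_m b)=(b,\Omega_\psi(-m)1)=0$), which is also what makes the uniqueness argument close when the second argument has larger degree than the first.
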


  For $\mathbf m=(m_1,\dots, m_n)$ set
$$
x_{\mathbf m}=x_{m_1}^-\cdots x_{m_n}^-
$$
and define the length of such a Poincare-Birkhoff-Witt basis element to be $|\mathbf m|=n$.
  \begin{prop}\label{symmetricform}  For  $\mathbf m=(m_1,\dots, m_n)\in\mathbb Z^n$, and $\mathbf k=(k_1,\dots, k_l)\in \mathbb Z^l$, if $n>l$, then
  \begin{equation} \label{orthonormal} 
 (x_{\mathbf m},x_{\mathbf k}) =0 . 
 \end{equation}
 On the other hand if $n=l$ with 
\begin{gather*}
  m_1\geq m_2\geq \cdots \geq m_n,\quad k_1\geq k_2\geq \cdots \geq k_n, \\
  \sum_{i=1}^nm_i=  \sum_{i=1}^nk_i
\end{gather*}
   we have
 \begin{equation} \label{orthonormal} 
 (x_{\mathbf m},x_{\mathbf k}) \equiv\delta_{\mathbf m,\mathbf k}\mod q^2\mathbb Z[\![q]\!].
\end{equation}
and the form is symmetric.
  \end{prop}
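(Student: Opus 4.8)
The plan is to prove both assertions by induction on $n = |\mathbf m|$, using the defining adjunction $(x_m^- a, b) = (a, \Omega_\psi(-m) b)$ from \propref{form} to peel off one factor $x_{m_1}^-$ at a time and then apply the commutation relation \eqnref{omegapsi6} for $\Omega_\psi(k) x^-(m)$ on $\mathcal N_q^-$. Concretely, for $\mathbf m = (m_1, \dots, m_n)$ and $\mathbf k = (k_1, \dots, k_l)$ write
\begin{equation*}
(x_{\mathbf m}, x_{\mathbf k}) = (x_{m_1}^- x_{(m_2,\dots,m_n)}, x_{\mathbf k}) = (x_{(m_2,\dots,m_n)}, \Omega_\psi(-m_1) x_{\mathbf k}).
\end{equation*}
Then I would expand $\Omega_\psi(-m_1)\, x_{k_1}^- x_{k_2}^- \cdots x_{k_l}^-$ by repeatedly invoking \eqnref{omegapsi6}: each application either produces a scalar term $\delta_{-m_1, -k_j}\gamma^{-m_1}$ (times $\gamma$-powers and $g_{q^{-1}}(r)$-coefficients) with $\Omega_\psi$ annihilating what remains, or moves $\Omega_\psi$ further to the right past another $x^-$. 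Since $\Omega_\psi(\,\cdot\,)(1) = 0$, the only surviving terms are those in which $\Omega_\psi$ has been completely absorbed against exactly one $x^-_{k_j}$-factor, leaving a length-$(l-1)$ element of $\mathcal N_q^-$ (plus shifts from the $x^-(m+r)$ terms, which preserve length). Thus $\Omega_\psi(-m_1) x_{\mathbf k}$ is a sum of PBW monomials each of length $l-1$.

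For the vanishing statement \eqnref{orthonormal}, this immediately gives the induction: if $n > l$ then after one step we are computing $(x_{(m_2,\dots,m_n)}, \text{length }{\le}\,l-1)$ with $(n-1) > (l-1)$, so the inductive hypothesis applies; the base case $n=1 > 0 = l$ is $(x_m^-, 1) = (1, \Omega_\psi(-m) 1) = 0$. For the $n = l$ diagonal statement, after the same step we reduce to pairings $(x_{(m_2,\dots,m_n)}, x_{\mathbf k'})$ with $|\mathbf k'| = n-1$; the subtlety is that $\Omega_\psi(-m_1) x_{\mathbf k}$ need not be expressed in the ordered PBW form, so I would use the Serre-type relation \eqnref{xminusreln} (equivalently \eqnref{Serre1}, \eqnref{Serre2}) to re-sort, tracking that reordering introduces only powers of $q^2$ and terms of the same total degree. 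The congruence mod $q^2\mathbb Z[\![q]\!]$ is then forced by bookkeeping: from \eqnref{grcomp}, $g_{q^{-1}}(0) = q^2 \equiv 0$ and $g_{q^{-1}}(r) = (q^4-1)q^{2(r-1)}$, so the $r=0$ term of \eqnref{omegapsi6} contributes $q^2 x^-(m)\Omega_\psi(k)$ (which is $\equiv 0$), while the scalar term $\delta_{k,-m}\gamma^k$ contributes exactly $1$ when the index matches — hence modulo $q^2$ only the "pull out the matching index and recurse" branch survives, and one checks it matches indices in the unique order-respecting way precisely when $\mathbf m = \mathbf k$, giving $\delta_{\mathbf m, \mathbf k}$.

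The main obstacle I anticipate is the reordering bookkeeping in the $n = l$ case: after applying \eqnref{omegapsi6} the resulting monomials are in the "wrong" order (as the remark after \eqnref{Serre2} warns), and one must argue that when re-expressing in ordered PBW form modulo $q^2$, the cross terms $-x^-_k x^-_{l+1}$ in \eqnref{xminusreln} either vanish mod $q^2$ or cannot contribute to the diagonal pairing because they shift the multiset of indices away from $\mathbf k$. I would handle this by a careful induction on $n$ with a simultaneous claim: $(x_{\mathbf m}, x_{\mathbf k}) \equiv 0 \bmod q^2$ unless the multiset $\{m_i\}$ equals $\{k_j\}$, in which case it is $\equiv 1$. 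The symmetry of the form is either cited from \propref{form} directly (it is asserted there that the form is symmetric) or follows a posteriori from the diagonal formula together with \eqnref{orthonormal}, since a symmetric-on-a-basis relation extends by bilinearity; I would note that non-degeneracy mod $q^2$ is an immediate corollary, as the Gram matrix in the ordered PBW basis is upper-unitriangular modulo $q^2\mathbb Z[\![q]\!]$ after grading by length and total degree.
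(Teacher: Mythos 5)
Your overall skeleton --- peel off $x_{m_1}^-$ with the adjunction of \propref{form}, expand $\Omega_\psi(-m_1)x_{\mathbf k}$ via \eqnref{omegapsi6}, re-sort with the Serre relations, and induct on length --- is the same as the paper's, and your treatment of the case $n>l$ (including the base case $(x_m^-,1)=(1,\Omega_\psi(-m)1)=0$) is correct. The gap is in the case $n=l$. You claim that modulo $q^2$ ``only the pull-out-the-matching-index branch survives'' because $g_{q^{-1}}(0)=q^2$ and $g_{q^{-1}}(r)=(q^4-1)q^{2(r-1)}$. That disposes of the terms with $r=0$ and $r\geq 2$, but not of $r=1$: one has $g_{q^{-1}}(1)=q^4-1\equiv -1 \bmod q^2\mathbb Z[\![q]\!]$. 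So at every stage of the recursion a genuinely surviving term $-\,x^-_{k_1+1}\Omega_\psi(-m_1-1)x^-_{k_2}\cdots x^-_{k_n}$ is produced, and these terms are the whole difficulty. Your proposed rescue --- a simultaneous induction claim that $(x_{\mathbf m},x_{\mathbf k})\equiv 0$ unless the multisets $\{m_i\}$ and $\{k_j\}$ coincide --- is false: taking $\mathbf m=(0,2)$ and $\mathbf k=(1,1)$ (same weight, different multisets), the paper's own $n=2$ computation gives $(x^-_0x^-_2,x^-_1x^-_1)=g_{q^{-1}}(1)\equiv -1\bmod q^2\mathbb Z[\![q]\!]$. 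So the strengthened hypothesis you want to induct on does not hold for unordered tuples, and the induction collapses.

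What actually makes the proof work, and what the paper spends most of its effort on, is tracking the precise shape of the iterated $r=1$ contributions: each time that branch is taken the already-processed indices are shifted up by one, so after $i$ steps one is pairing against monomials of the special form $x^-_{k_1+1}\cdots x^-_{k_{i-1}+1}\Omega_\psi(-m_1-t)x^-_{k_i}\cdots x^-_{k_n}$; this is the content of \eqnref{ind3}, proved with the help of the integrality statement \eqnref{omegaind} and the inversion-counting argument for re-sorting. Only at the very end does one invoke the descending-order hypotheses on both $\mathbf m$ and $\mathbf k$: the surviving $r=1$ term forces conditions such as $m_2=k_1+1$ and $k_2=m_1+1$, whence $m_1\geq m_2=k_1+1>k_1\geq k_2=m_1+1$, a contradiction. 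Without an argument of this kind (or a replacement for it) the congruence $(x_{\mathbf m},x_{\mathbf k})\equiv\delta_{\mathbf m,\mathbf k}$ is not established. The symmetry assertion, cited from \propref{form}, is fine; your remark on non-degeneracy belongs to the subsequent corollary and in any case depends on first closing this gap.
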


\begin{proof}   The fact that the form is symmetric comes from \propref{form} above.  Suppose $n>l$.  Then
\begin{align*} 
 (x_{m_1}&\cdots x_{m_n},x_{k_1}\cdots x_{k_l})= (x_{m_2}\cdots x_{m_n},\Omega_\psi(-m_1)x_{k_1}\cdots x_{k_l}) \\
 &=\delta_{m_1,k_1} (x_{m_2}\cdots x_{m_n}, x_{k_2}\cdots x_{k_n})   \\
 &\hskip 75pt+\sum_{r\geq 0}g_{q^{-1}}(r) (x_{m_2}\cdots x_{m_n},x_{k_1+r}\Omega_\psi(-m_1-r)x_{k_2}\cdots x_{k_l}).
  \end{align*}
By the Serre relations \eqnref{Serre1} and \eqnref{Serre2}
$$
x_{k_1+r}\Omega_\psi(-m_1-r)x_{k_2}\cdots x_{k_l}
$$
is a sum of monomials of length $l-1$ we can use induction to see that 
$$
(x_{m_2}\cdots x_{m_n},x_{k_1+r}\Omega_\psi(-m_1-r)x_{k_2}\cdots x_{k_l})=0.
$$
Hence $ (x_{m_1}\cdots x_{m_n},x_{k_1}\cdots x_{k_l})=0$.

Now suppose $n=l$.   For $n=1$ we have
\begin{equation*}  
 (x_{m},x_{k})=(1,\Omega_\psi(-m)x_k^-)=\delta_{m,k} \end{equation*}
by \eqnref{omegapsi6}.  

For $n=2$ we have by \eqnref{omegapsi6} for $m_1\geq m_2$, $k_1\geq k_2$ and $m_1+m_2=k_1+k_2$
\begin{align*} 
 (x_{m_1}x_{m_2},x_{k_1}x_{k_2})&= (x_{m_2},\Omega_\psi(-m_1)x_{k_1}x_{k_2}) \\
 &=\delta_{m_1,k_1} (x_{m_2}, x_{k_2})   +\sum_{r\geq 0}g_{q^{-1}}(r) (x_{m_2},x_{k_1+r}\Omega_\psi(-m_1-r)x_{k_2}) \\
 &=\delta_{\mathbf m,\mathbf k}     +\sum_{r\geq 0}g_{q^{-1}}(r) (x_{m_2},x_{k_1+r} )\delta_{m_1+r,k_2} \\
 &=\delta_{\mathbf m,\mathbf k}     +\sum_{r\geq 0}g_{q^{-1}}(r) \delta_{m_2,k_1+r}\delta_{m_1+r,k_2} \\
 &=\delta_{\mathbf m,\mathbf k}     +H(k_2-m_1)g_{q^{-1}}(k_2-m_1) \delta_{m_2-k_1,k_2-m_1 }  \\
  \end{align*}
where $H$ is the Heaviside function given by $H(n)=1$ if $n\geq 0$ and $H(n)=0$ otherwise.    Interchanging $(m_1,m_2)\leftrightarrow (k_1,k_2)$ in the above calculation we see that the $ (x_{m_1}x_{m_2},x_{k_1}x_{k_2})=(x_{l_1}x_{k_2},x_{m_1}x_{m_2})$.  Now if $k_2-m_1\neq 1$, then it is clear from \eqnref{grcomp}, that $ (x_{m_1}x_{m_2},x_{k_1}x_{k_2})\in \delta_{\mathbf m,\mathbf k}+q^2\mathbb Z[\![q]\!]$.  If $k_2-m_1=1$, then the second summand above is nonzero if and only if $m_2-k_1=1$.  But then
$$
m_1\geq m_2 =k_1+1>k_1\geq k_2=m_1+1
$$
which is impossible.  Hence for $n=2$, we have \eqnref{orthonormal}. 

Assume that \eqnref{orthonormal} holds up to Poincare-Birkhoff-Witt monomials of length $n-1$. 
Let us first prove by induction that for all $1\leq i\leq n-1$ and any $p\in\mathbb N$, 
\begin{align} 
(x^-_{m_2}\cdots x^-_{m_n},x^-_{s_1}x^-_{s_2}\cdots x^-_{s_i }\Omega_\psi(-m_1-p)x_{s_{i+1}}^-\cdots x_{s_n}^-)  \in \mathbb Z[\![q]\!] \label{omegaind},\\ 
(x^-_{m_2}\cdots x^-_{m_n},x^-_{s_2 }\cdots x_{s_n}^-)  \in \mathbb Z[\![q]\!], 
\end{align}
for any $\mathbf s=(s_2,\dots,s_n)\in\mathbb Z$ (so that $x^-_{s_2 }\cdots x_{s_n}^-$ is not necessarily a PBW monomial).  We say that $(s_2,\dots, s_n)$ has $k$ ascending inversions if the number of pairs of indices $(i,l)$ with $i<j$ and $s_i<s_j$ is $k$. Recall the Serre relations \eqnref{Serre1} and \eqnref{Serre2}.  Suppose there is an ascending inversion at the pair of indices $(i,i+1)$ with $s_i=k$ and $s_{i+1}=k+1$, then 
\begin{equation}
(x^-_{m_2}\cdots x^-_{m_n},x^-_{s_2 }\cdots x^-_{s_i } x_{s_{i+1}}^-\cdots x_{s_n}^-)= q^2
(x^-_{m_2}\cdots x^-_{m_n},x^-_{s_2 }\cdots x^-_{s_{i+1} } x_{s_{i}}^-\cdots x_{s_n}^-).
\end{equation}
 Then we have decreased the number of ascending inversions and by induction on the number of inversion on products of length $n-1$ we conclude
\begin{align*}
(x^-_{m_2}\cdots x^-_{m_n},x^-_{s_2 }\cdots x^-_{s_i } x_{s_{i+1}}^-\cdots x_{s_n}^-)\in  \mathbb Z[\![q]\!] .
\end{align*}
Suppose there is an ascending inversion at the pair of indices $(i,i+1)$ with $s_i=l$ and $s_{i+1}=k+1$ with $l<k$, then
 \begin{align*}
(x^-_{m_2}\cdots x^-_{m_n},x^-_{s_2 }\cdots x^-_{s_i } x_{s_{i+1}}^-\cdots x_{s_n}^-)&= (x^-_{m_2}\cdots x^-_{m_n},x^-_{s_2 }\cdots x^-_{l} x_{k+1}^-\cdots x_{s_n}^-)  \\
&= q^2(x^-_{m_2}\cdots x^-_{m_n},x^-_{s_2 }\cdots x^-_{k+1 } x_{l}^-\cdots x_{s_n}^-)  \\
&- (x^-_{m_2}\cdots x^-_{m_n},x^-_{s_2 }\cdots x^-_{k} x_{l+1}^-\cdots x_{s_n}^-)  \\
&+q^2 (x^-_{m_2}\cdots x^-_{m_n},x^-_{s_2 }\cdots x^-_{l+1 } x_{k}^-\cdots x_{s_n}^-).
\end{align*}
Observe that the number of ascending inversions in the first two summands has decreased by one and the last summand can also be rewritten as a sum of terms that have a decrease in the number of ascending inversions.  By induction on the number of inversion on products of length $n-1$ we again conclude
\begin{equation}\label{eqn1}
(x^-_{m_2}\cdots x^-_{m_n},x^-_{s_2 }\cdots x^-_{s_i } x_{s_{i+1}}^-\cdots x_{s_n}^-)\in  \mathbb Z[\![q]\!] .
\end{equation}

  For the first statement \eqnref{omegaind} we begin at $i=n-1$. By \eqnref{omegapsi6} and \eqnref{eqn1} this is 
\begin{align}
(x^-_{m_2}&\cdots x^-_{m_n},x^-_{s_1}x^-_{s_2}\cdots x^-_{s_{n-1}}\Omega_\psi(-m_1-p)x_{s_{n}}^- ) \label{ind}\\
&=\delta_{m_1+p,s_{n}}(x^-_{m_2}\cdots x^-_{m_n},x^-_{s_1}x^-_{s_2}\cdots x^-_{s_{n-1}})\in\mathbb Z[\![q]\!].\notag
\end{align}

Suppose \eqnref{omegaind} is true for $ i+1\leq n-1$. 
Then
\begin{align*}
(x^-_{m_2}&\cdots x^-_{m_n}x_{s_1}^-x^-_{s_2}\cdots  x^-_{s_i}\Omega_\psi(-m_1-t)x^-_{s_{i+1}}\cdots x_{s_n}^-) \\
&=\delta_{m_1+t,s_{i+1}}(x^-_{m_2}\cdots x^-_{m_n},x^-_{s_1}x^-_{s_2}\cdots x^-_{s_{i}} x^-_{s_{i+2}}\cdots s_{k_n}^-) \\
&\quad + \sum_{r\geq 0}g_{q^{-1}}(r)(x^-_{m_2}\cdots x^-_{m_n},x_{s_1}^-x^-_{s_2 }\cdots x^-_{s_{i} } x^-_{s_{i+1}+r}\Omega_\psi(-m_1-t-r)x^-_{s_{i+2}}\cdots x_{s_n}^-)  \\
&\equiv\delta_{m_1+t,s_{i+1}}(x^-_{m_2}\cdots x^-_{m_n},x^-_{s_1}x^-_{s_2}\cdots x^-_{s_{i}} x^-_{s_{i+2}}\cdots x_{s_n}^-) \\
&\quad +  g_{q^{-1}}(1)\delta_{m_1+t+1,s_{i+2}}\left(x^-_{m_2}\cdots x^-_{m_n},x_{s_1}^-x^-_{s_2}\cdots x^-_{s_{i}} x^-_{s_{i+1}+1}x^-_{s_{i+3}}\cdots x_{s_n}^-\right)  \mod  \mathbb Z[\![q]\!]  \\
&\equiv 0 \mod  \mathbb Z[\![q]\!] .
\end{align*}
Hence \eqnref{omegaind} is proved.

Now we want to prove a refined special case of \eqnref{omegaind}: For any $1\leq i\leq n-1$ and $t\in\mathbb Z_{\geq 0}$ one has
\begin{align}
(x^-_{m_2}&\cdots x^-_{m_n}x_{k_1+1}^-x^-_{k_2+1}\cdots  x^-_{k_{i-1}+1}\Omega_\psi(-m_1-t)x^-_{k_{i}}\cdots x_{k_n}^-)  \label{ind3}\\
&\equiv\delta_{m_1+t,k_{i}}(x^-_{m_2}\cdots x^-_{m_n},x^-_{k_1+1}x^-_{k_2+1}\cdots x^-_{k_{i-1}+1} x^-_{k_{i+1}}\cdots k_{k_n}^-) 
 \mod  q^2\mathbb Z[\![q]\!] .\notag
\end{align}
Here we assume $k_1\geq k_2\geq \cdots \geq k_n$.
For $i=n-1$ this is just \eqnref{ind}.  
Now for any $t\geq 0$ we assume that \eqnref{ind3} is true for $i+1\leq n-1$.  Then by \eqnref{omegaind} and induction we have 
\begin{align*}
(x^-_{m_2}&\cdots x^-_{m_n},x^-_{k_1+1}x^-_{k_2+1}\cdots x^-_{k_{i-1}+1}\Omega_\psi(-m_1-t)x_{k_{i}}^-\cdots x_{k_n}^-) \\
&=\delta_{m_1+t,k_{i}}(x^-_{m_2}\cdots x^-_{m_n},x^-_{k_1+1}x^-_{k_2+1}\cdots x^-_{k_{i-1}+1} x^-_{k_{i+1}}\cdots x_{k_n}^-) \\
&\quad + \sum_{r\geq 0}g_{q^{-1}}(r)(x^-_{m_2}\cdots x^-_{m_n},x_{k_1+1}^-x^-_{k_2+1}\cdots x^-_{k_{i-1}+1} x^-_{k_{i}+r}\Omega_\psi(-m_1-t-r)x^-_{k_{i+1}}\cdots x_{k_n}^-)  \\
&=\delta_{m_1+t,k_{i}}(x^-_{m_2}\cdots x^-_{m_n},x^-_{k_1+1}x^-_{k_2+1}\cdots x^-_{k_{i-1}+1} x^-_{k_{i+1}}\cdots x_{k_n}^-) \\
&\quad + g_{q^{-1}}(1)(x^-_{m_2}\cdots x^-_{m_n},x_{k_1+1}^-x^-_{k_2+1}\cdots x^-_{k_{i-1}+1} x^-_{k_{i}+1}\Omega_\psi(-m_1-t-1)x^-_{k_{i+1}}\cdots x_{k_n}^-)   \mod q^2\mathbb Z[\![q]\!] \\
&\equiv\delta_{m_1+t,k_{i}}(x^-_{m_2}\cdots x^-_{m_n},x^-_{k_1+1}x^-_{k_2+1}\cdots x^-_{k_{i-1}+1} x^-_{k_{i+1}}\cdots x_{k_n}^-) \\
&\quad +  g_{q^{-1}}(1)\delta_{m_1+t+1,k_{i+1}}\left(x^-_{m_2}\cdots x^-_{m_n},x_{k_1+1}^-x^-_{k_2+1}\cdots x^-_{k_{i-1}+1} x^-_{k_{i}+1}x^-_{k_{i+2}}\cdots x_{k_n}^-\right)  \mod q^2\mathbb Z[\![q]\!]  \\
&\equiv\delta_{m_1+t,k_{i}}(x^-_{m_2}\cdots x^-_{m_n},x^-_{k_1+1}x^-_{k_2+1}\cdots x^-_{k_{i-1}+1} x^-_{k_{i+1}}\cdots x_{k_n}^-) \\
&\quad +  g_{q^{-1}}(1)\delta_{m_1+t+1,k_{i+1}}\delta_{m_2,k_1+1}\cdots \delta_{m_{i+1},k_i+1}\delta_{m_{i+2},k_{i+2}}\cdots \delta_{m_n,k_n}  \mod q^2\mathbb Z[\![q]\!].
\end{align*}
where we used the fact that all monomials appearing are of PBW type with weakly decreasing indices. But the second summand in the last congruence above is nonzero only if 
$$
m_1 \geq m_{i+1}=k_i+1>k_i\geq k_{i+1}=m_1+t+1
$$
which is impossible for $t\geq 0$.  Hence the second summand is zero modulo $ \mod q^2\mathbb Z[\![q]\!]$.  This completes the proof of \eqnref{ind3}.

Now we show the induction step to complete the proof of the proposition:
\begin{align*} 
 (x_{m_1}&\cdots x_{m_n},x_{k_1}\cdots x_{k_n})= (x_{m_2}\cdots x_{m_n},\Omega_\psi(-m_1)x_{k_1}\cdots x_{k_n}) \\
 &=\delta_{m_1,k_1} (x_{m_2}\cdots x_{m_n}, x_{k_2}\cdots x_{k_n})   +\sum_{r\geq 0}g_{q^{-1}}(r) (x_{m_2}\cdots x_{m_n},x_{k_1+r}\Omega_\psi(-m_1-r)x_{k_2}\cdots x_{k_n}) \\
 &\equiv\delta_{\mathbf m,\mathbf k}     + g_{q^{-1}}(1) (x_{m_2}\cdots x_{m_n},x_{k_1+1}\Omega_\psi(-m_1-1)x_{k_2}\cdots x_{k_n})\mod q^2\mathbb Z[\![q]\!] \\
 &\equiv \delta_{\mathbf m,\mathbf k}     +g_{q^{-1}}(1) \delta_{m_1+1,k_2}(x_{m_2}\cdots x_{m_n},x_{k_1+1} x_{k_3}\cdots x_{k_n}) \mod q^2\mathbb Z[\![q]\!] \\
 &\equiv \delta_{\mathbf m,\mathbf k}     +g_{q^{-1}}(1) \delta_{m_1+1,k_2}\delta_{m_2,k_1+1}\delta_{m_3,k_3}\cdots \delta_{m_n,k_n} \mod q^2\mathbb Z[\![q]\!]\\
  \end{align*}
  where we used \eqnref{omegaind} in the third line and \eqnref{ind3} in the fourth line. The second summand in the last congruence is nonzero  if and only if $m_1+1=k_2,m_2=k_1+1,m_3=k_3,\dots m_n=k_n$. But this means that
$$
m_1\geq m_2=k_1+1>k_1\geq k_2=m_1+1
$$
which is a contradiction. This completes the proof of the proposition.

\end{proof}

\begin{cor}  The form $(\enspace,\enspace)$ is non-degenerate. 
\end{cor}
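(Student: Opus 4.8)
The plan is to deduce non-degeneracy of $(\enspace,\enspace)$ directly from the structural information provided by \propref{symmetricform}. The key observation is that the PBW monomials $x_{\mathbf k}$ with weakly decreasing index sequence $\mathbf k = (k_1 \geq \cdots \geq k_n)$ form a basis of $\mathcal N_q^-$ (over $\mathbb F(q^{1/2})$, or more precisely the form is defined integrally so one works over $\mathbb Z[\![q]\!]$ and then extends), graded by length $n$ and by total weight $\sum k_i$. Since $\mathcal N_q^-$ decomposes as a direct sum over pairs $(n, w)$ of the finite-dimensional spans of such PBW monomials, and since \eqnref{orthonormal} shows the form pairs the length-$n$ part with the length-$l$ part trivially when $n \neq l$, it suffices to prove non-degeneracy of the restriction of the form to each finite-dimensional weight-and-length-homogeneous subspace $V_{n,w} = \operatorname{span}\{x_{\mathbf k} : |\mathbf k| = n, \sum k_i = w, k_1 \geq \cdots \geq k_n\}$.

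On each such finite-dimensional space, \propref{symmetricform} tells us that the Gram matrix $G = \big((x_{\mathbf m}, x_{\mathbf k})\big)_{\mathbf m, \mathbf k}$ satisfies $G \equiv I \pmod{q^2 \mathbb Z[\![q]\!]}$, i.e. $G = I + q^2 N$ for some matrix $N$ with entries in $\mathbb Z[\![q]\!]$. Hence $\det G \equiv 1 \pmod{q^2\mathbb Z[\![q]\!]}$, so in particular $\det G$ is a nonzero element of $\mathbb Z[\![q]\!]$, and therefore a nonzero element of $\mathbb F(q^{1/2})$. A bilinear form on a finite-dimensional space whose Gram matrix (in some basis) has nonzero determinant is non-degenerate. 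Taking the direct sum over all $(n,w)$, the form on all of $\mathcal N_q^-$ is non-degenerate: if $0 \neq a \in \mathcal N_q^-$, decompose $a$ into its homogeneous components, pick a component $a_{n,w} \neq 0$, and find $b \in V_{n,w}$ with $(a_{n,w}, b) \neq 0$; since the form is zero between distinct homogeneous pieces, $(a,b) = (a_{n,w}, b) \neq 0$.

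The step that requires a little care — and is the main (mild) obstacle — is justifying that the weakly decreasing PBW monomials genuinely form a basis of $\mathcal N_q^-$, so that the Gram matrix computation is against a true basis rather than a spanning set. This follows from the Serre-type relations \eqnref{Serre1} and \eqnref{Serre2}, which were used throughout the proof of \propref{symmetricform} to rewrite arbitrary monomials in weakly decreasing order, together with the fact (standard for this Drinfeld-type presentation) that these monomials are linearly independent; one also needs the form to be well-defined integrally, i.e. valued in $\mathbb Z[\![q]\!]$ on PBW monomials, which is exactly the content of the auxiliary statements \eqnref{omegaind}ff. established inside the proof above. Granting this, the corollary is immediate. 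I would write: since by \propref{symmetricform} the Gram matrix of $(\enspace,\enspace)$ with respect to the PBW basis, restricted to each finite-dimensional homogeneous subspace, is congruent to the identity modulo $q^2$, its determinant is a nonzero power series and hence invertible in $\mathbb F(q^{1/2})$; as $\mathcal N_q^-$ is the direct sum of these pairwise-orthogonal subspaces, the form is non-degenerate on $\mathcal N_q^-$.
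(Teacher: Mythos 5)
Your proof is correct, and it is in spirit the same as the paper's: both reduce non-degeneracy to the near-orthonormality of the PBW monomials established in \propref{symmetricform}. The difference is in execution. The paper's proof is a one-liner: for $u=\sum_{\mathbf m}a_{\mathbf m}x_{\mathbf m}$ it pairs $u$ against each $x_{\mathbf k}$ and writes $0=(u,x_{\mathbf k})=\sum_{\mathbf m}a_{\mathbf m}(x_{\mathbf m},x_{\mathbf k})=a_{\mathbf k}$, i.e.\ it reads the coefficient off directly as if the Gram matrix were exactly the identity. But \propref{symmetricform} only gives $(x_{\mathbf m},x_{\mathbf k})\equiv\delta_{\mathbf m,\mathbf k}\bmod q^2\mathbb Z[\![q]\!]$, so the paper's last equality is literally an equality only modulo $q^2$, and as written the argument has a small gap. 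Your version repairs exactly this: by restricting to the finite-dimensional length-and-weight homogeneous subspaces (which are pairwise orthogonal, by the $n>l$ vanishing together with symmetry for different lengths, and by the weight-matching Kronecker deltas for equal lengths), the Gram matrix becomes $I+q^2N$ with $N$ having entries in $\mathbb Z[\![q]\!]$, so its determinant has Taylor expansion $1+O(q^2)$ at $q=0$ and is therefore a nonzero element of $\mathbb F(q^{1/2})$; non-degeneracy follows. What your route buys is rigor at the cost of invoking finite-dimensionality of the homogeneous pieces and the linear independence of the weakly decreasing PBW monomials — both of which the paper also uses implicitly, so nothing extra is really being assumed. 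Your closing caveat about needing the monomials to be a genuine basis and the form to be $\mathbb Z[\![q]\!]$-valued on them is well placed; those are exactly the implicit inputs of the paper's argument as well.
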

\begin{proof}  Suppose $u\in \mathcal N_q^-$, with $(u,v)=0$ for all $v\in \mathcal N_q^-$ and say $u=\sum_{\mathbf m}a_{\mathbf m}x_{m_1}\cdots x_{m_n}$, then in particular this holds for any $v=x_{k_1}\cdots x_{k_n}$.  Hence
$$
0=(u,x_{k_1}\cdots x_{k_n})=\sum_{\mathbf m}a_{\mathbf m}(x_{m_1}\cdots x_{m_n},x_{k_1}\cdots x_{k_n})=a_{\mathbf k}.
$$
Thus $a_{\mathbf k}=0$ for all $\mathbf k$.
\end{proof}

\section{ Imaginary Verma Modules for $A_1^{(1)}$}

We begin by recalling some basic facts and constructions for the affine
Kac-Moody algebra $ A_1^{(1)}$
and its imaginary Verma modules.
See \cite{K} for Kac-Moody algebra terminology and standard notations.

\subsection{}
The algebra $A_1^{(1)}$ is the affine Kac-Moody algebra over field $\mathbb F$  with
generalized Cartan matrix $A=(a_{ij})_{0\le i,j \le 1} = \begin{pmatrix}2&-2 \\ -2&\
2 \\ \end{pmatrix}$.
The algebra $A_1^{(1)}$ has a Chevalley-Serre presentation with generators
$e_0, e_1, f_0, f_1, h_0, h_1, d$ and relations
\begin{align*}
&[h_i,h_j] =0, \ \ [h_i,d]=0, \\
&[e_i,f_j] = \delta_{ij}h_i, \\
&[h_i, e_j] = a_{ij}e_j, \ \ [h_i, f_j] = -a_{ij}f_j, \\
&[d, e_j]= \delta_{0,j} e_j, \ \ [d, f_j]=-\delta_{0,j} f_j, \\
&(\text{ad}\, e_i)^3e_j = (\text{ad}\, f_i)^3f_j = 0, \quad i \neq j.
\end{align*}
Alternatively, we may realize $A_1^{(1)}$ through the loop algebra
construction
$$
A_1^{(1)}
\cong {\mathfrak sl}_2 \otimes \mathbb F[t,t^{-1}] \oplus \mathbb F c \oplus
\mathbb F d
$$
with Lie bracket relations
\begin{align*}[x\otimes t^n,y\otimes t^m]&= [x,y] \otimes t^{n+m}
+ n \delta_{n+m,0}(x,y)c, \\
[x \otimes t^n,c] = 0 = [d,c] , \quad & \quad [d,x \otimes t^n] = nx \otimes t^n,
\end{align*}for $x,y \in {\mathfrak sl_2}$, $n,m \in \mathbb Z$, where $(\ , \ )$ denotes the
Killing form on ${\mathfrak sl_2}$.
For $x \in {\mathfrak sl_2}$ and $n \in \mathbb Z$, we write $x(n)$ for
$x \otimes t^n$.

Let $\Delta$ denote the root system of $A_1^{(1)}$, and let
$\{ \alpha_0, \alpha_1\}$ be a basis for $\Delta$.  Let $\delta = \alpha_0 + \alpha_1$,
the minimal imaginary root.  Then
$$
\Delta = \{ \pm \alpha_1 + n\delta\ |\ n \in \mathbb Z\} \cup \{ k\delta\ |\ k \in \mathbb Z
\setminus \{ 0 \} \}.
$$

\subsection{}
The universal enveloping
algebra $
U(A_1^{(1)})$ of $A_1^{(1)}$
is the associative algebra
over $\mathbb F$ with 1
generated by the elements $h_0, h_1, d, e_0, e_1, f_0, f_1$
with defining relations
\begin{align*}&[h_0,h_1]=[h_0,d] = [h_1,d]=0, \\
&h_ie_j-e_jh_i = a_{ij}e_j, \quad h_if_j-f_jh_i=-a_{ij}f_j, \\
&d e_j-e_j d =\delta_{0,j}e_j, \quad d f_j-f_j d = -\delta_{0,j}f_j, \\
&e_if_j-f_je_i = \delta_{ij}h_i, \\
&e_je_i^3-3e_ie_je_i^2+3e_i^2e_je_i-e_i^3e_j = 0 \text{ for } i \neq j, \\
&f_jf_i^3-3f_if_jf_i^2+3f_i^2f_jf_i-f_i^3f_j = 0 \text{ for } i \neq j.
\end{align*}
 Corresponding to the loop algebra formulation of $A_1^{(1)}$ is an
alternative description of
$U(A_1^{(1)})$ as the associative algebra over $\mathbb F$ with 1 generated
by the elements $e(k), f(k)$ $(k \in \mathbb Z)$, $h(l)$ $(l \in
\mathbb Z\setminus \{ 0\})$,
$c, d, h$, with relations
\begin{align*}& [c,u]=0 \ \ \text {for all} \ u\in U(A_1^{(1)}), \\
& [h(k), h(l)]=2k \delta_{k+l,0} c, \\
& [h,d]=0, \ \ [h, h(k)]=0, \\
& [d,h(l)]=l h(l), \ \ [d,e(k)]=ke(k), \ \ [d,f(k)]=kf(k),\\
& [h,e(k)]=2e(k), \ \ [h,f(k)]=-2f(k), \\
& [h(k), e(l)]=2e(k+l), \ \ [h(k), f(l)]=-2f(k+l), \\
& [e(k), f(l)]=h(k+l)+k \delta_{k+l,0} c.
\end{align*}

\subsection{}  A
subset $S$ of the root system $\Delta$ is called {\it closed}
if $\alpha, \beta \in S$ and
$\alpha+\beta \in \Delta$
implies $\alpha+\beta \in S$.  The subset $S$ is called a {\it closed
partition } of the roots if $S$ is closed,
$S \cap(-S) = \emptyset$, and $S\cup -S = \Delta$ \cite{JK},\cite{MR89m:17032},\cite{MR1078876},\cite{MR1175820}.
The set
$$
S= \{ \alpha_1+k\delta \ |\ k\in \mathbb Z \} \cup \{l\delta\ |\ l \in \mathbb Z_{>0} \}
$$
is a closed partition of $\Delta$ and is $W\times
\{\pm{1}\}$-inequivalent to the standard
partition of the root system into positive and negative roots \cite{MR95a:17030}.

For $\hat{\mathfrak g}= A_1^{(1)}$, let ${\mathfrak g}_{\pm}^{(S)}=\sum_{\alpha \in S}
\hat{\mathfrak g}_{\pm \alpha}$.  In the loop algebra formulation of $\hat{\mathfrak g}$, we have
that ${\mathfrak g}_+^{(S)}$ is the
subalgebra generated by $e(k)$ $(k \in \mathbb Z)$ and $h(l)$ $(l\in \mathbb Z_{>0})$
and ${\mathfrak g}_-^{(S)}$ is the subalgebra generated by $f(k)$ $(k \in \mathbb Z)$ and
$h(-l)$ $(l\in \mathbb Z_{>0})$.  Since $S$ is a partition of the root system,
the algebra has a direct
 sum decomposition
$$
\hat{\mathfrak g}={\mathfrak g}_{-}^{(S)} \oplus {\mathfrak h} \oplus {\mathfrak g}_{+}^{(S)}.
$$
Let $U({\mathfrak g}_{\pm}^{(S)})$ be the universal enveloping algebra of
${\mathfrak g}_{\pm}^{(S)}$. Then, by the PBW theorem, we have
$$
U(\hat{\mathfrak g}) \cong U({\mathfrak g}_{-}^{(S)}) \otimes U({\mathfrak h})\otimes U({\mathfrak g}_{+}^{(S)}),
$$
where $U({\mathfrak g}_{+}^{(S)})$ is generated by $ e(k)$ $(k\in \mathbb Z)$, $h(l)$
$(l\in \mathbb Z_{>0})$,
$U({\mathfrak g}_{-}^{(S)})$ is generated by $f(k)$ $(k\in \mathbb Z)$, $h(-l)$ $(l\in
\mathbb Z_{>0})$ and $U({\mathfrak h})$,
the universal enveloping algebra of ${\mathfrak h}$, is generated by
$h,c$ and $d$.

Let $\lambda\in P$, the weight lattice of $\hat{\mathfrak g}=A_1^{(1)}$.
A $U(\hat{\mathfrak g})$-module $V$ is called a {\it weight} module if
$V=\oplus_{\mu \in P} V_{\mu}$, where
$$
V_{\mu}=\{ v \in V\ |\ h\cdot v=\mu(h)v, c\cdot v=\mu(c)v,
d\cdot v = \mu(d)v \}.
$$
Any submodule of a weight module is a weight module.
A $U(\hat{\mathfrak g})$-module $V$ is
called an {\it $S$-highest weight module}
with highest weight $\lambda$ if there is a
non-zero $v_{\lambda} \in V$ such that
(i) $u^+ \cdot v_{\lambda} = 0$ for all $u^+\in U({\mathfrak g}_{+}^{(S)})
\setminus \mathbb F^*$, (ii) $h\cdot v_{\lambda}=\lambda(h)v_{\lambda}$, $c\cdot v_{\lambda}
= \lambda(c)v_{\lambda}$, $d\cdot v_{\lambda}  = \lambda(d)v_{\lambda}$,
(iii) $V=U(\hat{\mathfrak g})\cdot v_{\lambda} = U({\mathfrak g}_{-}^{(S)}) \cdot v_{\lambda}$.
An $S$-highest weight module is a weight module.

For $\lambda \in P$, let $I_S(\lambda)$ denote the ideal of $U(A_1^{(1)})$
generated by
$e(k)$ $(k\in \mathbb Z)$, $h(l)$ $(l>0)$, $h-\lambda(h) 1$,
$c-\lambda(c) 1$, $d-\lambda(d) 1$.
Then we define $M(\lambda) = U(A_1^{(1)})/I_S(\lambda)$ to be the {\it
imaginary Verma module} of
$A_1^{(1)}$ with highest weight $\lambda$.
Imaginary Verma modules have many structural features similar to those of
standard
Verma modules,
with the exception of the infinite-dimensional weight spaces.
Their properties were investigated in \cite{MR95a:17030}, from which we recall
 the following
proposition \cite[ Proposition 1, Theorem 1]{MR95a:17030}.

\begin{prop} (i) $M(\lambda)$ is a $U({\mathfrak g}_-^{(S)})$-free
module of rank 1 generated by
the $S$-highest weight vector $1\otimes 1$ of weight $\lambda$.\newline
(ii) $\dim M(\lambda)_{\lambda} =1$; $0<\dim M(\lambda)_{\lambda - k \delta} < \infty$
for any integer $k>0$; if $\mu \neq \lambda- k\delta$ for any integer
 $k \ge 0$ and
$M(\lambda)_{\mu} \neq 0$, then $\dim M(\lambda)_{\mu} = \infty$.\newline
(iii) Let $V$ be a $U(A_1^{(1)})$-module generated by some $S$-highest
weight vector $v$
 of weight $\lambda$.  Then there exists a unique surjective homomorphism
$\varphi:M(\lambda) \to V$ such that $\varphi(1 \otimes 1) =v$. \newline
(iv) $M(\lambda)$ has a unique maximal submodule. \newline
(v) Let $\lambda, \mu \in P$.  Any non-zero element of
$\text{Hom}\,_{U(A_1^{(1)})}(M(\lambda),M(\mu))$ is
injective.\newline
(vi) $M(\lambda)$ is irreducible if and only if $\lambda(c)\neq 0$.
\hskip 1cm $\square$
\end{prop}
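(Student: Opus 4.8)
My plan is to treat parts (i)--(v) by the usual Verma-module bookkeeping, all of which rests on the PBW factorization $U(\widehat{\mathfrak g})\cong U({\mathfrak g}_-^{(S)})\otimes U({\mathfrak h})\otimes U({\mathfrak g}_+^{(S)})$ recalled above, and to isolate part (vi), especially the implication $\lambda(c)\neq 0\Rightarrow$ irreducibility, as the step that needs genuine work. For (i): $I_S(\lambda)$ is the \emph{left} ideal generated by the $e(k)$, the $h(l)$ with $l>0$, and $h-\lambda(h),\,c-\lambda(c),\,d-\lambda(d)$; pushing these generators to the right through the PBW factorization gives $I_S(\lambda)=U({\mathfrak g}_-^{(S)})\otimes U({\mathfrak h})\otimes U({\mathfrak g}_+^{(S)})^{+}\,+\,U({\mathfrak g}_-^{(S)})\otimes\mathfrak m_\lambda\otimes\mathbb F$, where $U({\mathfrak g}_+^{(S)})^{+}$ is the augmentation ideal and $\mathfrak m_\lambda\subset U({\mathfrak h})$ the maximal ideal at $\lambda$. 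Hence $u\mapsto u\cdot(1\otimes1)$ is a $U({\mathfrak g}_-^{(S)})$-module isomorphism $U({\mathfrak g}_-^{(S)})\xrightarrow{\ \sim\ }M(\lambda)$, which is freeness of rank one. Part (iii) is then formal: $u\mapsto u\cdot v$ defines $U(\widehat{\mathfrak g})\to V$, the $S$-highest weight relations for $v$ say it kills $I_S(\lambda)$, it is onto because $V=U(\widehat{\mathfrak g})v$, and uniqueness holds since $1\otimes1$ generates $M(\lambda)$. For (ii) I would use the PBW basis of $U({\mathfrak g}_-^{(S)})$ given by monomials $f(k_1)\cdots f(k_n)h(-l_1)\cdots h(-l_p)$ with $k_1\leq\cdots\leq k_n$ and $l_1\geq\cdots\geq l_p>0$; such a monomial sends $1\otimes1$ to a vector of weight $\lambda-n\alpha_1+(\sum k_i-\sum l_j)\delta$. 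Weight $\lambda$ forces $n=p=0$, so $\dim M(\lambda)_\lambda=1$; weight $\lambda-k\delta$ with $k>0$ forces $n=0$ and $\sum l_j=k$, so the dimension is the finite nonzero number of partitions of $k$; and a weight $\mu=\lambda-n\alpha_1+m\delta$ with $n\geq 1$ is hit by infinitely many monomials (trade one $f$-index against an $h$-index, or two $f$-indices against each other), so $\dim M(\lambda)_\mu=\infty$. Since $M(\lambda)_\mu\neq 0$ with $\mu\notin\{\lambda-k\delta:k\geq0\}$ forces $n\geq 1$, this is the stated trichotomy.

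For (iv) and (v) I would exploit $\dim M(\lambda)_\lambda=1$ together with freeness. Any proper submodule $N$ has $N_\lambda=0$ (otherwise $1\otimes1\in N$ and $N=M(\lambda)$), so the sum of all proper submodules still has zero $\lambda$-weight space, hence is proper: it is the unique maximal submodule. For (v), a nonzero $\varphi\in\Hom(M(\lambda),M(\mu))$ sends $1\otimes1$ to a nonzero element of $M(\mu)$, which by freeness of $M(\mu)$ over $U({\mathfrak g}_-^{(S)})$ equals $u_0\cdot(1\otimes1_\mu)$ for a unique nonzero $u_0$. Then $\varphi(u\cdot(1\otimes1_\lambda))=uu_0\cdot(1\otimes1_\mu)$, which is zero iff $uu_0=0$ in $U({\mathfrak g}_-^{(S)})$; since the enveloping algebra $U({\mathfrak g}_-^{(S)})$ has no zero divisors (its PBW associated graded is a symmetric algebra), $u_0\neq0$ forces $u=0$, so $\varphi$ is injective.

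For (vi), the implication $\lambda(c)=0\Rightarrow M(\lambda)$ reducible is easy: when $c$ acts by $0$, the vector $h(-l)\cdot(1\otimes1)$ ($l>0$) is itself an $S$-highest weight vector, because $e(k)h(-l)(1\otimes1)=-2e(k-l)(1\otimes1)=0$ and $h(m)h(-l)(1\otimes1)=2m\,\delta_{m,l}\,\lambda(c)\,(1\otimes1)=0$ for $m>0$; its weight is $\lambda-l\delta\neq\lambda$, and since the weights of $U({\mathfrak g}_-^{(S)})$ with zero $\alpha_1$-component are non-positive multiples of $\delta$, the submodule $U(\widehat{\mathfrak g})h(-l)(1\otimes1)=U({\mathfrak g}_-^{(S)})h(-l)(1\otimes1)$ has zero $\lambda$-weight space and is therefore a nonzero proper submodule.

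The converse $\lambda(c)\neq 0\Rightarrow M(\lambda)$ irreducible is the heart of the matter. I would set $\mathcal F:=U(\mathrm{span}\{h(-l):l>0\})\cdot(1\otimes1)\subseteq M(\lambda)$, note that it is a module over the Heisenberg subalgebra $\bigoplus_{k\neq0}\mathbb F h(k)\oplus\mathbb F c$ which is \emph{irreducible} precisely because $\lambda(c)\neq0$ (a standard bosonic Fock space), and filter $M(\lambda)$ by $F_n:=$ the span of PBW monomials in the above basis with at most $n$ factors $f(k)$. One checks $F_0=\mathcal F$, that $U({\mathfrak g}_+^{(S)})$ preserves every $F_n$ with $e(j)\colon F_n\to F_{n-1}$, and that on $\mathrm{gr}\,M(\lambda)$ the $e(j)$ act as contraction-type operators on $S^\bullet(\mathrm{span}\{f(k)\})$ twisted by the Heisenberg action on $\mathcal F$, with the scalar $\lambda(c)$ entering exactly where an index pair sums to zero. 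Given a nonzero submodule $N$, the goal is to show $N\cap\mathcal F\neq0$; then irreducibility of $\mathcal F$ gives $1\otimes1\in N$, hence $N=M(\lambda)$. To reach $\mathcal F$ one descends in $F$-degree: for a nonzero weight vector $w\in F_n\setminus F_{n-1}$ with $n\geq1$ one must exhibit some $e(j)$ with $e(j)w\neq0$. This is the main obstacle — the non-vanishing has to be extracted from the Heisenberg data, using $\lambda(c)\neq0$ to guarantee that the leading term survives, in exactly the way that fails when $\lambda(c)=0$. This is the one place in the proposition where a real computation, rather than PBW bookkeeping, is unavoidable.
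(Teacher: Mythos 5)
The paper itself contains no proof of this proposition: it is quoted from Futorny \cite{MR95a:17030} (Proposition~1 and Theorem~1 there) and closed with a box, so the only meaningful comparison is with the standard argument in that source. Your treatment of (i)--(v) and of the reducibility half of (vi) is complete, correct, and is essentially that argument: the description of $I_S(\lambda)$ via the PBW factorization, the weight count using monomials $f(k_1)\cdots f(k_n)h(-l_1)\cdots h(-l_p)$ (including the observation that $n\geq 1$ allows infinitely many index choices with fixed weight), the unique maximal submodule from $\dim M(\lambda)_\lambda=1$, injectivity in (v) from $U({\mathfrak g}_-^{(S)})$ having no zero divisors, and the singular vector $h(-l)(1\otimes 1)$ when $\lambda(c)=0$ are all sound.

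The genuine gap is the implication $\lambda(c)\neq 0\Rightarrow M(\lambda)$ irreducible, which you correctly identify as the crux but then leave unproved. Your filtration by the number of $f$-factors and the appeal to irreducibility of the Fock space $\mathcal F=U(\mathrm{span}\{h(-l):l>0\})(1\otimes 1)$ over the Heisenberg subalgebra is the right frame, but the deferred step is the entire content of the statement: for a nonzero weight vector $w$ of weight $\lambda-n\alpha_1+m\delta$ with $n\geq 1$ one must exhibit $j$ with $e(j)w\neq 0$. Concretely, commuting $e(j)$ through a PBW monomial, the terms that stay in the same $f$-degree stratum come from $[e(j),f(-j)]=h+jc$, which contributes the scalar $\nu(h)+j\lambda(c)$ on the appropriate weight space; since $\lambda(c)\neq 0$ this is nonzero for all but one $j$, and one must then choose $j$ (for instance, large relative to the finitely many loop indices occurring in $w$) so that the finitely many surviving terms cannot cancel, forcing a nonzero leading coefficient of $e(j)w$. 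Until that non-vanishing argument is written down, part (vi) is a plan rather than a proof; everything else in your proposal stands.
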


Suppose now that $\lambda(c)=0$.  Consider an ideal
$J(\lambda)$ of  $U(A_1^{(1)})$ generated by $I_S(\lambda)$ and  $h(l)$ for
all $l$. Set 
$$\tilde{M}(\lambda)=U(A_1^{(1)})/J(\lambda).$$
Then $\tilde{M}(\lambda)$ is a homomorphic image of
 $M(\lambda)$ which we call the \emph{reduced imaginary Verma
 module}. The module $\tilde{M}(\lambda)$ has a $\Lambda$-gradation:
 $$\tilde{M}(\lambda)=\sum_{\xi\in\Lambda}\tilde{M}(\lambda)_{\xi}.$$
 

\section{The category $\mathcal O_{\text{red,im}}$}  
Let $G$ be the Heisenberg subalgebra, $G=\sum_{k\in \mathbb Z\setminus \{0\}}\hat{\mathfrak g}_{k\delta}\oplus \mathbb F c$. We say that a nonzero $\hat{\mathfrak g}$-module $V$ is $G$-compatible if 
\begin{enumerate}[i).]
\item  \label{1}  $V$ has a decomposition $V=TF(V)\oplus T(V)$ into a sum of nonzero $G$-submodules such that
\item  \label{2} 
 $G$ is bijective on $TF(V)$ (that is, any nonzero element $g\in G$ is a bijection on $TF(V)$) and $TF(V)$ has no nonzero $\hat{\mathfrak g}$-submodule, 
 \item \label{3}     $G\cdot T(V)=0$. 
\end{enumerate}
Consider the set 
$$
\mathfrak h^*_{red}:=\{\lambda\in\mathfrak h^*\,|\, \lambda(c)=0,\lambda (h)\notin \mathbb Z_{\geq 0}\}.
$$
As usual let
$$
e=\begin{pmatrix} 0 & 1\\ 0 & 0 \end{pmatrix},\quad f=\begin{pmatrix} 0 & 0\\ 1 & 0 \end{pmatrix},\quad h=\begin{pmatrix} 1 & 0\\ 0 & -1 \end{pmatrix}.
$$
The category $\mathcal O_{\text{red,im}}$ has as objects $\hat{\mathfrak g}$-modules $M$ such that 
\begin{enumerate}
\item  $$
M=\bigoplus_{\nu\in\mathfrak h^*_{red}}M_\nu,\quad\text{ where }\quad M_\mu=\{m\in M\,|\, hm=\nu(h)m\}.
$$ 
Note $\dim M_\nu$ may be infinite dimensional.
\item $e_n =e\otimes t^n$ acts locally nilpotently for any $n\in \mathbb Z$.
\item $M$ is $G$-compatible.

\end{enumerate}
The morphisms in the category are  $\hat{\mathfrak g}$-module homomorphisms.  For example direct sums of reduced imaginary Verma modules $ \bar{M}(\lambda)$ are in the category $\mathcal O_{\text{red,im}}$.
In this case $TF(\bar{M}(\lambda))=\oplus_{k\in \mathbb Z, n\in \mathbb Z_{>0}}\bar{M}(\lambda)_{\lambda-n\alpha+k\delta}$ and 
 $T(\bar{M}(\lambda))=\bar{M}(\lambda)_{\lambda}\simeq \mathbb F$.

A loop module for $\mathfrak g$ is any representation of the form $\hat M:=M\otimes \mathbb F[t,t^{-1}]$ where $M$ is a highest weight module for $\mathfrak{sl}(2,\mathbb F)$ and 
$$
(x\otimes t^k)(m\otimes t^l):=x\cdot m\otimes t^{k+l},\quad c(m\otimes t^l)=0.
$$
Here $x\cdot m$ is the action of $x\in \mathfrak{sl}(2,\mathbb F)$ on $m\in M$.
\begin{prop}  
\begin{enumerate} 
\item   The loop modules $\hat M$ with $M$ in the category $\mathcal O$ for $\mathfrak{sl}(2,\mathbb F)$ are not in $\mathcal O_{\text{red,im}}$.
\item  For  $\lambda,\mu\in\mathfrak h^*_{red}$ one has  $\text{Ext}^1_{\hat{\mathfrak g}}(\bar M(\lambda),\bar M(\mu))=0$.
\end{enumerate}
\end{prop}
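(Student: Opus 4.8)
\emph{Part (1).} On any loop module $\hat M=M\otimes\mathbb F[t,t^{-1}]$ the central element $c$ acts as $0$ by construction, while $c$ is a nonzero element of the Heisenberg algebra $G$. If $\hat M$ were in $\mathcal O_{\text{red,im}}$ it would be $G$-compatible, say $\hat M=TF(\hat M)\oplus T(\hat M)$; but condition (ii) requires every nonzero $g\in G$, in particular $c$, to act bijectively on $TF(\hat M)$, which is impossible for a nonzero subspace since $c$ acts as $0$. Hence $TF(\hat M)=0$, contradicting that $TF(\hat M)$ is a nonzero $G$-submodule. (Even reading ``nonzero'' as qualifying only $\hat M$, one gets $\hat M=T(\hat M)$ and $G\cdot\hat M=0$, so $h(1)(v\otimes1)=\Lambda(h)(v\otimes t)=0$ for the highest weight vector $v$ of $M$, forcing $\Lambda(h)=0\in\mathbb Z_{\ge0}$, which contradicts condition (1) that all weights lie in $\mathfrak h^*_{red}$.) So $\hat M\notin\mathcal O_{\text{red,im}}$.

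\emph{Part (2).} Let $0\to\bar M(\mu)\xrightarrow{\iota}E\xrightarrow{\pi}\bar M(\lambda)\to0$ be a short exact sequence in $\mathcal O_{\text{red,im}}$. It suffices to lift the generator $v_\lambda$ of $\bar M(\lambda)$ to a vector $w\in E$ of weight $\lambda$ which is a reduced $S$-highest weight vector, i.e.\ $h(l)w=0$ for all $l\ne0$ and $e(k)w=0$ for all $k$: then $w$ generates a submodule which, by the universal property of $\bar M(\lambda)$, is a quotient of $\bar M(\lambda)$, and since $\pi$ carries it onto $\bar M(\lambda)$ the composite $\bar M(\lambda)\to U(\widehat{\mathfrak g})w\to\bar M(\lambda)$ is the identity on $v_\lambda$, hence splits $\pi$. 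First, if $\lambda-\mu$ is not in the root lattice $Q$ then the weights of the two modules lie in distinct $Q$-cosets, $E$ splits as a sum of the corresponding weight-graded $\widehat{\mathfrak g}$-submodules, and we are done; so assume $\lambda-\mu\in Q$. Now pick any weight-$\lambda$ lift $w$ of $v_\lambda$ (possible by exactness of the weight-space functor). Using $G$-compatibility of $E$, write $w=w_f+w_t$ with $w_f\in TF(E)$, $w_t\in T(E)$; since $\pi$ is a $G$-module map it sends $TF(E)=h(1)E$ and $T(E)=E^G$ into $TF(\bar M(\lambda))$ and $T(\bar M(\lambda))$ respectively, so $\pi(w_f)=0$ and $\pi(w_t)=v_\lambda$. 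Replacing $w$ by $w_t$ we may assume $w\in T(E)$, i.e.\ $h(l)w=0$ for all $l\ne0$; this settles the Heisenberg conditions.

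It remains to prove $e(k)w=0$. Put $\xi_k:=e(k)w$; since $\pi(e(k)w)=e(k)v_\lambda=0$ these lie in $\iota(\bar M(\mu))$, with weight $\lambda+\alpha_1+k\delta$. Applying $[h(l),e(k)]=2e(l+k)$ to $w$ and using $h(l)w=0$ together with the fact that $h(l)$ acts on $\bar M(\mu)\cong\mathbb F[f(j):j\in\mathbb Z]$ as the shift-derivation $D_l$ with $D_l(f(j))=-2f(l+j)$ (from the reduced structure recalled above), one obtains
\[
D_l(\xi_k)=-\xi_{l+k}\qquad(l\ne0,\ k\in\mathbb Z),
\]
and $e(k)\xi_{k'}=e(k')\xi_k$ from $[e(k),e(k')]=0$. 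If $\lambda+\alpha_1+k\delta$ is not a weight of $\bar M(\mu)$ — which holds for every $k$ unless $\lambda$ lies below $\mu$ along $\alpha_1$, at depth $\ge 2$, or at depth $1$ with matching $\delta$-degree — then all $\xi_k=0$ for weight reasons and the splitting follows. The essential (and hardest) case is when $\lambda$ is such a weight of $\bar M(\mu)$: here $\xi_0$ is a nonzero homogeneous polynomial in the $f(j)$ of some fixed degree $r\ge1$, and combining the displayed relations for varying $l$ (e.g.\ $D_lD_m(\xi_0)=-D_{l+m}(\xi_0)$ for $l,m,l+m\ne0$, and $D_lD_{-l}(\xi_0)=\xi_0$) yields an overdetermined linear system; a leading-monomial analysis of the shift-derivations — the same mechanism that shows each $D_l$ is injective on each polynomial degree — forces $\xi_0=0$, whence $\xi_k=-D_k(\xi_0)=0$ for all $k$. (Conceptually this is the vanishing of $\mathrm{Ext}^1_{\widehat{\mathfrak g}}(\bar M(\lambda),\bar M(\mu))\cong H^1(\mathfrak b',\bar M(\mu)\otimes\mathbb F_{-\lambda})$ in the relevant weight, via Shapiro's lemma for the reduced imaginary Borel $\mathfrak b'$ with $\bar M(\lambda)=\mathrm{Ind}_{\mathfrak b'}^{\widehat{\mathfrak g}}\mathbb F_\lambda$.) This last step — controlling the shift-derivation relations on $\bar M(\mu)$ in the deep-weight cases — is the main obstacle; the rest is bookkeeping with the weight grading and with the $G$-compatibility of $E$, and one may also invoke the non-degenerate contravariant form / irreducibility of $\bar M(\mu)$ as an alternative to the combinatorial argument.
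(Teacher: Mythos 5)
Your lead argument, that $c$ is a nonzero element of $G$ acting by zero and hence cannot be bijective on $TF(\hat M)$, rests on a literal reading of the definition under which \emph{no} module with $\lambda(c)=0$ could be $G$-compatible --- in particular the reduced imaginary Verma modules themselves, which the paper exhibits as objects of $\mathcal O_{\text{red,im}}$, would be excluded and the category would be empty. The bijectivity condition is clearly meant for the nonzero elements of $\sum_{k\neq 0}\hat{\mathfrak g}_{k\delta}$, so this argument proves nothing about loop modules specifically. Your parenthetical remark, however, is essentially the paper's actual proof: applying $h\otimes t^r$ to an element of $T(\hat M)$ and using $G\cdot T(\hat M)=0$ forces $\Lambda(h)=0$, contradicting $\mathfrak h^*_{red}$. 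You should promote that computation to the main argument and apply it to an \emph{arbitrary} nonzero element of $T(\hat M)$ (not only in the case $\hat M=T(\hat M)$): it shows $T(\hat M)=0$, which violates condition (i), and then $\hat M=TF(\hat M)$ would be a nonzero $\hat{\mathfrak g}$-submodule of $TF(\hat M)$, violating (ii).

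\textbf{On part (2).} Your overall strategy --- lift $v_\lambda$ to a weight vector $w$ with $Gw=0$ using the canonical decomposition $E=TF(E)\oplus T(E)$, then show $e(k)w=0$ so that $w$ generates a splitting copy of $\bar M(\lambda)$ --- matches the paper's up to the orientation of the exact sequence, and the reduction to $Gw=0$ is fine. The gap is in the decisive step: when $\lambda+\alpha_1+k\delta$ is a weight of $\bar M(\mu)$ you must show $\xi_k=e(k)w=0$, and here you only \emph{assert} that the relations $D_l(\xi_k)=c\,\xi_{k+l}$ form an overdetermined system killed by ``a leading-monomial analysis.'' No such analysis is given, and it is not a routine injectivity statement: for instance in degree $1$ every candidate $\xi_0$ satisfies $D_lD_{-l}\xi_0=4\xi_0$ identically, so the relations you display give nothing there, and one must instead play different $l$ against each other (e.g.\ compare $D_1^2\xi_0$ with $2D_2\xi_0$) and then control the resulting operators on each infinite-dimensional weight space in every degree. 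The paper sidesteps all of this: it uses that $e_0$ acts locally nilpotently on $\bar v_\mu$, so the $\mathfrak{sl}(2)$-subalgebra generated by $e_0,f_0$ (or by $e_k,f_k$ when $s=1$) applied to $\bar v_\mu$ would yield a nontrivial extension of two $\mathfrak{sl}(2)$-Verma modules with distinct highest weights, hence distinct central characters --- impossible --- which forces the relevant $e$'s to kill $\bar v_\mu$. Either import that central-character argument or actually carry out the shift-derivation computation in all degrees; as written, the key vanishing is unproved.
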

\begin{proof} Suppose $\hat M$ is a loop module with $M\in \mathcal O$ for $\mathfrak{sl}(2,\mathbb F)$.   Then $\hat M$ satisfies condition (1) and (2) from above.   Assume $\hat M=TF(\hat M)\oplus T(\hat M)$ satisfies \eqnref{1}-\eqnref{3} above.   Now take any $\sum_{i=-k}^km_i\otimes t^i\in T(\hat M)$ with $m_i\in M_\mu$ for some weight $\mu$.  Then by \eqnref{3} we have 
$$
0=h\otimes t^r\cdot \left(\sum_{i=-k}^km_i\otimes t^i\right)=\lambda(h)\left(\sum_{i=-k}^km_i\otimes t^{i+k}\right)
$$
so that $\lambda(h)=0$ which contradicts $\lambda\in \mathfrak h^*_{red}$.  Then $T(\hat M)=0$ and $\hat M=TF(\hat M)$ which is a $\hat{\mathfrak g}$-module contradicting \eqnref{1} and \eqnref{2} and thus (3).



For (2) we need to show that there are no nontrivial extensions between reduced imaginary Verma modules 
 $ \bar{M}(\lambda)$ and $ \bar{M}(\mu)$.  
If $\mu=\lambda+k\delta$ for some integer $k$ then any extension of $\bar M(\lambda)$ by itself has a two dimensional highest weight space of weight $\lambda$.  Any highest weight vector in this space generates an irreducible submodule and thus the extension splits as a direct sum of two submodules each isomorphic to $\bar M(\lambda)$. 

Indeed suppose now $\mu=\lambda+k\delta-s\alpha$ 
for some integers $k$ and $s>0$.
Consider a short exact sequence 
\begin{equation}\label{ses} 
\begin{CD}
0 @>>> \bar M(\lambda)@>\iota >>  M@>\pi>>  \bar M(\mu)@>>> 0,
\end{CD}
\end{equation} 
where we view $\iota$ as just the inclusion map. For any preimage weight vector $\bar v_\mu$ of a highest weight (w.r.t. $\mathfrak{sl}(2,\mathbb F)$) vector $v_\mu$ in $\bar M(\mu)$ one has  $G\bar v_\mu\in\bar M(\lambda)$.
 On the other hand $G\bar v_{\mu}=0$. Suppose $0\neq  v= h_m\bar v_{\mu}$.  Then $h_m\bar v_{\mu}=h_mv'$ for some $v'\in \bar M(\lambda)$ (one cannot have $h_m\bar v_\mu=\alpha v_\lambda$, $\alpha\in\mathbb F$ as otherwise $\mu+m\delta=\lambda$ and $s=0$).   Then $h_n(v'-\bar v_\mu)=0$ and so $v'-\bar v_\mu\in T(M)=\mathbb Fv_\lambda$ which is a contradiction to the fact  $\bar v_\mu\not\in \bar M(\lambda)$.

Recall that $e_0$ acts locally nilpotently on $\bar v_{\mu}$. 
Moreover, $e_0^t \bar v_{\mu}\neq 0$ if $t<s$, otherwise $e_0^{t-1} \bar v_{\mu}$ would generate a submodule in $ \bar{M}(\lambda)$ which is a contradiction. 
So, $e_0^s \bar v_{\mu}=0$ if $k=0$ and $e_0^{s-1} \bar v_{\mu}=0$ if $k\neq 0$. Without loss of generality we assume the latter. Suppose $s>1$.
Consider an $\mathfrak{sl}(2)$-subalgebra $\mathfrak{a}$ generated by $e_0$ and $f_0$ and an  $\mathfrak{a}$-module generated by $\bar v_{\mu}$. This module is a non trivial extension of two Verma modules over $\mathfrak{a}$ with highest weights $\lambda +k\delta - \alpha$ and $\lambda +k\delta -s\alpha$. But this is impossible (e.g. these modules have different central characters). Suppose now $s=1$. Then apply 
the same argument to  an $\mathfrak{sl}(2)$-subalgebra  generated by $e_{k}$ and $f_{k}$. Assuming $e_{k}\bar v_{\mu}\neq 0$ we obtain a contradiction as above. 
Therefore, $M= \bar{M}(\lambda)\oplus  \bar{M}(\mu)$ completing the proof.  

\end{proof}
 
 \begin{prop}\label{prop-irred}
If $M\in \mathcal O_{\text{red,im}}$ is a simple object, then $M\simeq \bar{M}(\lambda)$ for some $\lambda\in \mathfrak h^*_{red}$.

\end{prop}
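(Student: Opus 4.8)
The plan is to exploit the structural constraints built into the category $\mathcal O_{\text{red,im}}$, especially $G$-compatibility, together with the local nilpotency of the $e_n$, to locate an $S$-highest weight vector inside a simple object $M$ and then invoke Proposition~\ref{prop-irred}'s predecessor (the universality part, item (iii) of the proposition on imaginary Verma modules) to identify $M$ with a reduced imaginary Verma module. First I would fix a weight $\nu\in\mathfrak h^*_{red}$ with $M_\nu\neq 0$, pick $0\neq v\in M_\nu$, and use condition (2) of the category (each $e_n=e\otimes t^n$ acts locally nilpotently) together with the fact that $M$ lives in only finitely many $\delta$-cosets below any given weight in the "$TF$" direction; the idea is that repeatedly applying the $e_n$ (and elements of $G_+$) to $v$ must terminate, producing a nonzero vector $v_\lambda$ annihilated by all of $\mathfrak g_+^{(S)}$, i.e.\ by $e(k)$ for $k\in\mathbb Z$ and $h(l)$ for $l>0$. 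The subtlety here is that the weight spaces can be infinite-dimensional, so I cannot argue by dimension alone; instead I would use the grading by $\Lambda$ (the root lattice direction transverse to $\delta$) and the fact that $e_n$ raises the $\alpha$-component, so that a chain $v, e_{n_1}v, e_{n_2}e_{n_1}v,\dots$ cannot increase the $\alpha$-height indefinitely once one is inside $M$.

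Next I would bring in $G$-compatibility to control the Heisenberg action on $v_\lambda$. Write $M=TF(M)\oplus T(M)$. Since $M$ is simple and $TF(M)$ contains no nonzero $\hat{\mathfrak g}$-submodule, if $TF(M)\neq 0$ then $M\neq TF(M)$, forcing $T(M)\neq 0$; but $G\cdot T(M)=0$ and $T(M)$ is a nonzero $G$-submodule, and simplicity of $M$ then forces the $\hat{\mathfrak g}$-submodule generated by $T(M)$ to be all of $M$. Tracking where $v_\lambda$ sits: an $S$-highest weight vector is killed by $h(l)$, $l>0$, and one shows using the relation $[h(-l),h(l)]=2l\,c$ with $\lambda(c)=0$ together with the commutation relations that $v_\lambda$ is in fact annihilated by all of $G$ (not merely $G_+$) — this is the step where $\lambda(c)=0$ (guaranteed by $\nu\in\mathfrak h^*_{red}$) is essential, and it places $v_\lambda$ in $T(M)$. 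Then $U(\hat{\mathfrak g})v_\lambda = M$ by simplicity.

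Having produced a vector $v_\lambda$ of weight $\lambda$ (with $\lambda\in\mathfrak h^*_{red}$, so $\lambda(c)=0$) that is $S$-highest, killed by all $h(l)$, $l\neq 0$, and generates $M$, I would apply part (iii) of the imaginary Verma module proposition to get a surjection $M(\lambda)\twoheadrightarrow M$; since $M$ is killed by all $h(l)$, this factors through the reduced imaginary Verma module, giving a surjection $\tilde M(\lambda)\twoheadrightarrow M$. Finally, since $M$ is simple, it is the unique irreducible quotient of $\tilde M(\lambda)$; to conclude $M\simeq \bar M(\lambda)$ one checks that $\tilde M(\lambda)$ itself is irreducible for $\lambda\in\mathfrak h^*_{red}$ — this is exactly where the condition $\lambda(h)\notin\mathbb Z_{\geq 0}$ enters (it is the analogue, for the reduced imaginary Verma module, of the dominant-integral obstruction; one rules out the proper submodule that would otherwise arise from a singular vector via the $\mathfrak{sl}(2)$-subalgebras generated by $e_n,f_n$, as in the proof of the previous proposition). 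I expect this last irreducibility input — and, slightly before it, the careful argument that the "raising" process genuinely terminates in the presence of infinite-dimensional weight spaces — to be the main obstacles; the identification with $\bar M(\lambda)$ afterwards is then formal.
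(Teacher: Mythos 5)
Your overall strategy (locate a vector killed by all the $e(k)$ and by $G$, then use universality and simplicity of $\bar M(\lambda)$) has the same shape as the paper's, but the step where you actually produce that vector has a genuine gap, and it is exactly the step the paper handles by a different mechanism. You propose to iterate raising operators starting from an arbitrary weight vector and to argue termination because ``the $\alpha$-height cannot increase indefinitely'' and because $M$ allegedly lives in finitely many cosets in the $TF$ direction. Nothing in the axioms of $\mathcal O_{\text{red,im}}$ gives you either bound: the weights are only required to lie in $\mathfrak h^*_{red}$, i.e.\ $\nu(h)\notin\mathbb Z_{\geq 0}$, and when $\nu(h)$ is not an integer the values $\nu(h)+2k$ are never in $\mathbb Z_{\geq 0}$, so the whole string of weights $\nu+k\alpha+m\delta$ is permitted. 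Moreover, local nilpotency of each individual $e_n$ does not yield a common kernel: $e_0^{N}v=0$ does not prevent $e_1e_0^{N-1}v\neq 0$, and with infinitely many operators and infinite-dimensional weight spaces the usual category-$\mathcal O$ termination argument is unavailable. A secondary error: being $S$-highest (killed by $h(l)$, $l>0$) together with $\lambda(c)=0$ does \emph{not} imply being killed by the $h(-l)$ --- the imaginary Verma module $M(\lambda)$ with $\lambda(c)=0$ is itself a counterexample, being free over $U({\mathfrak g}_{-}^{(S)})$, which contains the $h(-l)$. The correct route to $Gv=0$ is the bijectivity of $G$ on $TF(M)$, not the commutator with $c$.

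The paper sidesteps all of this by starting with $0\neq v\in T(M)$ (nonzero by axiom (i) of $G$-compatibility), so that $Gv=0$ from the outset, and taking $N$ minimal with $e_0^Nv=0$; local nilpotency of the single operator $e_0$ suffices. The key trick is then the Heisenberg commutator: $0=h_ke_0^Nv=2Ne_ke_0^{N-1}v$, so $w=e_0^{N-1}v$ is killed by \emph{every} $e_k$ simultaneously. The remaining issue --- that $w$ might lie in $TF(M)$ rather than $T(M)$ --- is disposed of by inducing from $W=U(G)w$ and observing that $TF(I(W))=I(W)$ would force $TF(M)=M$, contradicting $T(M)\neq 0$. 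You would need to import this mechanism (or an equivalent one) to close the gap; as written, the existence of your $v_\lambda$ is not established. The final identification via the surjection $\tilde M(\lambda)\twoheadrightarrow M$ and simplicity of $\tilde M(\lambda)$ for $\lambda\in\mathfrak h^*_{red}$ is fine and agrees with the paper.
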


\begin{proof}
Consider any simple $M\in \mathcal O_{\text{red,im}}$. Let $v\in T(M)$ be a nonzero element of some weight 
$\lambda\in \mathfrak h^*_{red}$.  Then $Gv=0$ and $e_0^Nv=0$ for some positive integer $N$. Choose $N$ to be the least possible with 
such property. If $N=1$ then $e_nv=0$ for all integers $n$ and hence $M$ is a quotient of the reduced imaginary Verma module $ \bar{M}(\lambda)$ with highest weight $\lambda$. Since  $\lambda\in \mathfrak h^*_{red}$ then $ \bar{M}(\lambda)$ is simple and thus $M\simeq  \bar{M}(\lambda)$. Assume now that $N>1$ and set $w=e_0^{N-1}v$. Then $e_0w=0$. We have $0=h_{k\delta}e_0^Nv=2N\color{black}e_k e_0^{N-1}v=2Ne_k w$ for all integers $k$. Therefore $M$ is a quotient of the loop module induced from $U(G)w$ (with $e_kU(G)w=0$ for all integers $k$).  If $w\in T(M)$ then we are done. Suppose $w\not\in T(M)$ so $0\neq w\in TF(M)$ may be assumed to be a weight vector of weight $\mu$.   Then $W = U(G)w$ is a $G$ -submodule of $TF(M)$.
Consider the induced module 
$I(W)=Ind_{G+N_++H}^{\hat\hat{\mathfrak g}} W$ where $N_+=\oplus_{n\in\mathbb Z}\mathbb Fe_n$ acts by zero on $W$, $H=\mathbb Fh+\mathbb Fd$ acts by $hw=\mu(h)w$ and $dw=\mu(d)w$. Since $U(G)w\subset TF(M)$   then it is easy to see that $TF(I(W))=I(W)$. Hence the same 
holds for any of its quotients by the Short Five Lemma, i.e. $TF(M)=M$ which is a contradiction. Therefore $w\in T(M)$ which completes the proof. 
\end{proof}

\begin{thm}\label{thm-decomp}
If $M\in \mathcal O_{\text{red,im}}$ is any object then $M=\oplus_{\lambda_i\in \mathfrak h^*_{red}}\bar{M}(\lambda_i)$, $i\in I$
for some weights $\lambda_i$'s.
\end{thm}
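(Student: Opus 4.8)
The plan is to realize $M$ as the sum of all its submodules isomorphic to reduced imaginary Verma modules, to see that this sum is direct, and to see that it is all of $M$; \propref{prop-irred} and the preceding $\text{Ext}^1$-vanishing Proposition supply exactly what is needed.

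First I would upgrade \propref{prop-irred} from simple objects to arbitrary ones: \emph{every} nonzero $M'\in\mathcal O_{\text{red,im}}$ contains a submodule isomorphic to some $\bar M(\nu)$, $\nu\in\mathfrak h^*_{red}$. Choose a nonzero weight vector $v_0\in T(M')$; local nilpotency of $e_0$ gives a least $N\ge 1$ with $e_0^Nv_0=0$, and for $w=e_0^{N-1}v_0\neq 0$ the identity $[h_k,e_0^N]=2Ne_ke_0^{N-1}$ (using $[e_k,e_0]=0$) together with $Gv_0=0$ gives $h_k\cdot 0=h_ke_0^Nv_0=2Ne_kw$, hence $e_kw=0$ for all $k$; the induced- and loop-module obstruction already used in \propref{prop-irred} then rules out a nonzero $TF(M')$-component of $w$, so $w\in T(M')$, $Gw=0$, and $U(\hat{\mathfrak g})w$ is an $S$-highest weight module whose highest weight lies in $\mathfrak h^*_{red}$, hence by \propref{prop-irred} a nonzero quotient of, and so equal to, the corresponding simple $\bar M(\nu)$. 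Since the $\bar M(\nu)$ are simple, the usual argument --- well-order a family and discard any member contained in the sum of its predecessors --- shows that any sum of submodules of $M$ each isomorphic to some $\bar M(\lambda_i)$ is in fact a direct sum $\bigoplus_{i\in I}\bar M(\lambda_i)$ over a subfamily.

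Now let $N$ be the sum of all submodules of $M$ isomorphic to some $\bar M(\lambda)$, $\lambda\in\mathfrak h^*_{red}$, so that $N=\bigoplus_{i\in I}\bar M(\lambda_i)$, and suppose $N\neq M$. Using that $\mathcal O_{\text{red,im}}$ is closed under quotients (part of its being a Serre category), $M/N$ is a nonzero object of the category, so by the previous step it contains a submodule $P/N\cong\bar M(\mu)$ with $N\subsetneq P\subseteq M$, where $P/N$ is generated by an $S$-highest weight vector $\bar v_\mu$. Lift $\bar v_\mu$ to a weight vector $\tilde v_\mu\in P$; then $g\tilde v_\mu\in N$ for all $g\in G$ and $e_n\tilde v_\mu\in N$ for all $n$. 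Exactly as in the proof of the $\text{Ext}^1$-vanishing Proposition, $G\tilde v_\mu$ must lie in $TF(N)=G\cdot N$: a component in $T(N)=\bigoplus_i\mathbb F v_{\lambda_i}$ would force $\mu-\lambda_i\in\mathbb Z\delta$ and, combined with surjectivity of $G$ on the relevant weight space of $\bar M(\lambda_i)$, the same weight contradiction with $\tilde v_\mu\notin N$ as the $s=0$ case there. Since $G$ acts bijectively on $TF(N)$ we may subtract an element of $N$ from $\tilde v_\mu$ so that $G\tilde v_\mu=0$ without altering $\bar v_\mu$; a similar correction (local nilpotency of the $e_n$, simplicity of the $\bar M(\lambda_i)$) then arranges $e_n\tilde v_\mu=0$ for all $n$. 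Now $\tilde v_\mu\in T(M)$ is an $S$-highest weight vector, so $U(\hat{\mathfrak g})\tilde v_\mu$ is a quotient of $\bar M(\mu)$, hence $\cong\bar M(\mu)$; it maps onto $P/N$, so is not contained in $N$, contradicting the definition of $N$. Therefore $N=M$, proving the theorem.

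I expect the real obstacle to be the correction step: modifying $\tilde v_\mu$ by an element of the (possibly infinite) direct sum $N=\bigoplus_i\bar M(\lambda_i)$ into an honest $S$-highest weight vector --- that is, checking that $G\tilde v_\mu$ and the $e_n\tilde v_\mu$ land precisely in the subspaces of $N$ on which these equations can be solved. This is where one genuinely uses $\text{Ext}^1_{\hat{\mathfrak g}}(\bar M(\mu),\bar M(\lambda_i))=0$ for each $i$, the $G$-compatibility data ($T(N)=\ker G$, $G$ bijective on $TF(N)$, the $T$-part one-dimensional in each summand), and the weight-shift exclusions from the proofs of \propref{prop-irred} and of the $\text{Ext}^1$-vanishing Proposition, now run uniformly over the index set $I$. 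A secondary technical point is making the non-simple version of \propref{prop-irred} fully rigorous, in particular the induced- and loop-module obstruction to $w\notin T(M')$.
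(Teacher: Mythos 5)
Your strategy is genuinely different from the paper's. The paper argues directly on $T(M)$: for any nonzero $w\in T(M)$ it produces, exactly as in \propref{prop-irred}, a vector $w'$ annihilated by all $e_k$, rules out $w'\in TF(M)$ via the induced-module obstruction, and concludes that $w'$ --- hence every nonzero element of $T(M)$ --- generates a copy of some $\bar M(\lambda)$; the decomposition of $M$ is then assembled from these simple submodules without ever forming a quotient. Your first step (every nonzero object contains a $\bar M(\nu)$) is essentially that same argument, and your observation that a sum of simple submodules is a direct sum over a subfamily is standard. Where you diverge is in proving that the sum $N$ of all such submodules exhausts $M$: you pass to $M/N$ and run an $\text{Ext}^1$-vanishing/lifting argument.

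That last step contains a genuine gap, and arguably a circularity. You invoke that $\mathcal O_{\text{red,im}}$ is closed under quotients (``part of its being a Serre category''), but in the paper the Serre property is a \emph{corollary} of \thmref{thm-decomp}: it is deduced from the semisimplicity you are trying to prove. What actually needs checking is that $M/N$ is $G$-compatible, i.e.\ that it splits as $TF(M/N)\oplus T(M/N)$ with the required properties; this is not automatic for a quotient (injectivity of elements of $G$ on the image of $TF(M)$ can fail a priori), and without it you cannot even select the vector $v_0\in T(M/N)$ that starts your argument. In addition, your ``correction step'' --- solving for the adjustments of $\tilde v_\mu$ inside the possibly infinite direct sum $N=\bigoplus_{i\in I}\bar M(\lambda_i)$ --- amounts to $\text{Ext}^1_{\hat{\mathfrak g}}(\bar M(\mu),\bigoplus_i\bar M(\lambda_i))=0$, which does not follow formally from the pairwise vanishing when $I$ is infinite; you flag this as the main obstacle yourself but do not close it. The paper's route avoids both issues by staying inside $M$ and showing that $T(M)$ is covered by simple submodules; I would either reorganize your argument along those lines or first prove quotient-closure and the infinite-sum $\text{Ext}^1$ statement independently.
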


\begin{proof}
Consider the subspace $T(M)$. Since the weights of $M$ are in $\mathfrak h^*_{red}$, $T(M)$ is not a $\hat{\mathfrak g}$-submodule. 
Let $w\in T(M)$ be a nonzero element,  $W=U(G)w\subset T(M)$. Arguing as in the proof of Proposition \ref{prop-irred} we find a nonzero element $w'\in M$ such that 
  $e_k w'=0$ for all integers $k$. If $U(G)w'\neq \mathbb C w'$ then $w'\in TF(M)$ which is a contradiction. Hence $w'$ generates a submodule isomorphic to a reduced imaginary Verma module containing $W$. Thus each nonzero element of $T(M)$ generates $ \bar{M}(\lambda)$ for some $\lambda$. 
\end{proof}
\begin{cor}  The category $\mathcal O_{\text{red,im}}$ is closed under taking subquotients and direct sums so it is a Serre category. 
\end{cor}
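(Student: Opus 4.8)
The plan is to reduce everything to \thmref{thm-decomp}. By that theorem every object of $\mathcal O_{\text{red,im}}$ is a direct sum $\bigoplus_i\bar M(\lambda_i)$ with all $\lambda_i\in\mathfrak h^*_{red}$, and for such parameters $\bar M(\lambda_i)$ is simple (this is the fact used in the proof of \propref{prop-irred}); hence $\mathcal O_{\text{red,im}}$ is a semisimple category whose simple objects are exactly the $\bar M(\lambda)$, $\lambda\in\mathfrak h^*_{red}$. Closure under arbitrary direct sums is then immediate: applying \thmref{thm-decomp} summandwise writes $\bigoplus_\alpha M_\alpha$ as a direct sum of modules $\bar M(\lambda)$ with parameters in $\mathfrak h^*_{red}$, and such direct sums were already noted to lie in $\mathcal O_{\text{red,im}}$.

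For closure under subobjects, let $N\subseteq M$ with $M=\bigoplus_{i\in I}\bar M(\lambda_i)$. Since each summand is simple, $M$ is a semisimple $\hat{\mathfrak g}$-module, so $N$ is a semisimple submodule and a direct summand of $M$; writing $N$ as a sum of simple submodules and composing each with a nonzero coordinate projection shows every such submodule is isomorphic to one of the $\bar M(\lambda_i)$. Thus $N\cong\bigoplus_k\bar M(\lambda_{i(k)})$ is again a direct sum of reduced imaginary Verma modules with parameters in $\mathfrak h^*_{red}$, hence $N\in\mathcal O_{\text{red,im}}$ (its $G$-compatible decomposition $TF(N)\oplus T(N)$ being the direct sum of the decompositions of its summands), and $M/N$ is isomorphic to the complementary direct summand, again of this form, hence in $\mathcal O_{\text{red,im}}$. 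This establishes closure under subquotients.

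It remains to check closure under extensions, which together with the above yields the Serre property. Let $0\to M'\to M\xrightarrow{\pi}M''\to0$ be a short exact sequence of $\hat{\mathfrak g}$-modules with $M',M''\in\mathcal O_{\text{red,im}}$, and write $M''=\bigoplus_j\bar M(\mu_j)$; a section of $\pi$ amounts to a compatible family of sections over the summands $\bar M(\mu_j)$, so it suffices to split each $0\to M'\to\pi^{-1}(\bar M(\mu_j))\to\bar M(\mu_j)\to0$. As $\bar M(\mu_j)$ is cyclic, its preimage is generated over $M'$ by a single element $\tilde v$, and $U(\hat{\mathfrak g})\tilde v\cap M'$ is a submodule of the semisimple module $M'$, hence a direct summand; carving it off reduces the problem to splitting $0\to U(\hat{\mathfrak g})\tilde v\cap M'\to U(\hat{\mathfrak g})\tilde v\to\bar M(\mu_j)\to0$, whose left term is a direct sum of modules $\bar M(\lambda)$, $\lambda\in\mathfrak h^*_{red}$. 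Combining the resulting splittings gives $M\cong M'\oplus M''\in\mathcal O_{\text{red,im}}$, so $\mathcal O_{\text{red,im}}$ is closed under subquotients, direct sums and extensions, i.e. it is a Serre subcategory. The step I expect to be the genuine obstacle is precisely this last reduction: promoting the vanishing of $\text{Ext}^1_{\hat{\mathfrak g}}(\bar M(\mu),\bar M(\lambda))$ for individual simples, established earlier, to the vanishing of $\text{Ext}^1_{\hat{\mathfrak g}}(\bar M(\mu),\bigoplus_i\bar M(\lambda_i))$ for an infinite family. One handles it by exploiting that $\bar M(\mu)$ is cyclic --- so that the submodule of $\bigoplus_i\bar M(\lambda_i)$ actually occurring in such an extension is a countably generated direct summand --- together with the fact that $\bar M(\mu)$ is induced from a one-dimensional module over the (reduced) imaginary Borel and has a one-dimensional top weight line, which lets one peel off summands and reduce, via a finiteness argument, to the case of a single simple summand, where the cited $\text{Ext}^1$-vanishing applies directly.
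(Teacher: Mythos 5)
Your core argument is exactly the paper's (implicit) one: the corollary is stated without proof as an immediate consequence of \thmref{thm-decomp}, namely that every object decomposes as a direct sum of the simple modules $\bar M(\lambda)$, $\lambda\in\mathfrak h^*_{red}$, so the category is semisimple and closure under subobjects, quotients and direct sums follows by standard module theory. That part of your proposal is correct and establishes everything the corollary literally asserts. The one place you go beyond the paper is closure under extensions: the paper never claims or uses this (its ``Serre category'' is glossed only as ``closed under subquotients and direct sums''), and it is also the one place where your argument is not complete. You correctly identify the obstacle --- $\text{Ext}^1_{\hat{\mathfrak g}}(\bar M(\mu),-)$ need not commute with infinite direct sums, so the vanishing for a single simple target does not formally give vanishing for $\bigoplus_i\bar M(\lambda_i)$ --- but the proposed fix (``peel off summands and reduce via a finiteness argument'') is only a gesture; to make it work you would need to show, e.g., that the cyclic module $U(\hat{\mathfrak g})\tilde v$ meets only finitely many summands of $M'$, or run the $G$-compatibility/local-nilpotence analysis of the earlier Ext-vanishing proposition directly on the extension. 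Since the stated corollary does not require extension-closure, this does not affect the correctness of your proof of the actual claim, but you should either delete that paragraph or flag it as conjectural.
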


\section{Quantized Imaginary Verma modules}
Let $\Lambda$ denotes the weight lattice of $\hat{\mathfrak g} =A_1^{(1)}$,
$\lambda\in \Lambda$. Denote by $I^q(\lambda)$ the ideal of
$U_q=U_q(\hat{\mathfrak g}$) generated by $x^+(k)$, $k\in
\mathbb Z$, $a(l), l>0$, $K^{\pm 1}-q^{\lambda(h)}1$, $\gamma^{\pm
\frac{1}{2}}-q^{\pm \frac{1}{2}\lambda(c)}1$ and $D^{\pm 1}-q^{\pm
\lambda(d)}1$. The imaginary Verma module with highest weight
$\lambda$ is defined to be (\cite{MR97k:17014}) $$M_q(\lambda)=U/I^q(\lambda).$$

\begin{thm}[\cite{MR97k:17014}, Theorem 3.6]  The imaginary Verma module $M_q(\lambda)$ is simple
 if and only if $\lambda(c)\neq 0$.
\end{thm}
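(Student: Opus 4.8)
The plan is to mirror the classical argument (Proposition 5.x) that $M(\lambda)$ is simple if and only if $\lambda(c)\neq 0$, transporting it through the Drinfeld realization where the central element is $\gamma^{1/2}$ acting as $q^{\lambda(c)/2}$. First I would recall from \cite{MR97k:17014} that, by PBW, $M_q(\lambda)$ is free of rank one over the subalgebra generated by the $x^-_k$ ($k\in\mathbb Z$) and the $a_{-l}=h_{-l}$ ($l>0$), with highest weight vector $v_\lambda$; in particular the weight space $M_q(\lambda)_{\lambda-k\delta}$ is finite dimensional for $k>0$, spanned by PBW monomials in the $h_{-l}$ (since the $x^-_k$ shift the $\alpha_1$-component of the weight). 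The forward direction is the easy one: if $\lambda(c)=0$, then $\gamma=1$, and the relation \eqnref{axcommutator}, or rather the components of \eqnref{xx}, collapse so that the commutator $[h_k,h_l]=\delta_{k+l,0}\tfrac{[2k]}{k}\tfrac{\gamma^k-\gamma^{-k}}{q-q^{-1}}$ vanishes; the span of all PBW monomials having at least one $h_{-l}$ factor (together with its image under the lowering operators) becomes a proper nonzero submodule, exactly as the ideal $J(\lambda)\supsetneq I^q(\lambda)$ in the reduced construction of Section~5. So $M_q(\lambda)$ is not simple.

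For the converse I would assume $\lambda(c)\neq 0$ and show any nonzero submodule $N\subseteq M_q(\lambda)$ contains $v_\lambda$. Take $0\neq w\in N$ of weight $\lambda-k\delta-r\alpha_1$ for the largest possible $\alpha_1$-height, i.e.\ with $r$ minimal over all nonzero weight vectors in $N$; applying $x^+_n$'s (which act via \eqnref{xcommutator}) one checks $x^+_n w=0$ for all $n$ by maximality, so $w$ lies in the subspace annihilated by all $x^+_n$, which by the PBW/commutation analysis forces $r=0$, i.e.\ $w$ is a polynomial in the $h_{-l}$ applied to $v_\lambda$. Now the point is to lower through the Heisenberg part: for $\lambda(c)\neq 0$ the operators $h_l$ ($l>0$) act on $U(\text{Heisenberg})v_\lambda$ the way annihilation operators act on a Fock space with nonzero level, because $[h_l,h_{-l}]=\tfrac{[2l]}{l}\tfrac{\gamma^l-\gamma^{-l}}{q-q^{-1}}$ is a nonzero scalar (here I use that $q$ is generic and $\gamma=q^{\lambda(c)}\neq \pm1$, together with $[2l]\neq0$). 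Hence repeated application of suitable $h_l$'s to $w$ produces a nonzero multiple of $v_\lambda$, so $v_\lambda\in N$ and $N=M_q(\lambda)$.

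The main obstacle, and the step I would spend the most care on, is the maximality argument showing that a nonzero submodule must contain a vector killed by all the $x^+_n$ and lying in the Heisenberg-times-$v_\lambda$ subspace: one must control how $x^+_n$ acts on an arbitrary PBW monomial $x^-_{n_1}\cdots x^-_{n_j}\,h_{-l_1}\cdots h_{-l_i}\,v_\lambda$, using \eqnref{xcommutator} to rewrite $[x^+_n,x^-_{n_i}]$ in terms of the $\psi,\phi$ (hence the $h_{\pm m}$ and $K^{\pm1}$), and argue that no nontrivial cancellation can keep the $\alpha_1$-height from dropping unless $j=0$. This is a standard but slightly delicate straightening computation in the Drinfeld generators; I would organize it by induction on $j$ using the normal-ordering consequences of the Serre relations \eqnref{Serre1}--\eqnref{Serre2}. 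The rest is the Fock-space non-degeneracy of the Heisenberg action, which is immediate once $\gamma^{1/2}=q^{\lambda(c)/2}\neq\pm1$ and $q$ is generic, and this is exactly where the hypothesis $\lambda(c)\neq0$ is used in a way parallel to the classical case.
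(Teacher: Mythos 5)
First, a remark on the comparison itself: the paper does not prove this statement. It is imported verbatim from \cite{MR97k:17014}, Theorem 3.6, and used as a black box, so there is no internal argument to measure yours against; the assessment below is of your proposal on its own terms. Your outline does follow the strategy of the cited source and of Futorny's classical argument recalled in Section 5: for $\lambda(c)=0$, reducibility via the degenerate Heisenberg relations, and for $\lambda(c)\neq 0$, irreducibility by pushing any nonzero submodule $N$ up to $v_\lambda$, first through the $x^+_n$ and then through the Fock-space action of the $h_l$. The easy direction is essentially complete (best phrased as: when $\gamma=1$ each $h_{-l}v_\lambda$ is annihilated by every $x^+_n$ and every $h_k$, $k>0$, hence generates a proper submodule contained in the span of PBW monomials containing an $h$-factor), and the final Fock-space step is standard once $\gamma^{l}\neq\gamma^{-l}$.

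The genuine gap is precisely the step you flag and then defer: that a weight vector $w\in N$ of minimal $\alpha_1$-depth $r$, hence annihilated by all $x^+_n$, must have $r=0$. This is the entire content of the hard direction, and it is not a routine straightening exercise. Writing $w=\sum_j x^-_{n_{1,j}}\cdots x^-_{n_{r,j}}u_jv_\lambda$ with $u_j\in U(H^-)$ and applying $x^+_m$, the commutators in\eqnref{xcommutator} produce, for each $m$, a linear condition mixing the $\psi(m+n)$ (which kill $U(H^-)v_\lambda$ only after nontrivial commutation past the $h_{-l}$) with the $\phi(m+n)$ (which multiply by elements of $U(H^-)$), and one must exclude cancellation among infinitely many PBW monomials uniformly in $m$. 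That this cannot be a purely formal ``no cancellation'' argument is shown by the reduced module: for $\lambda(c)=\lambda(h)=0$ one has $x^+_mx^-_nv_\lambda=\delta_{m+n,0}[\lambda(h)]v_\lambda=0$ in $\tilde M_q(\lambda)$, so depth-one singular vectors do exist there; whether they exist is sensitive to the parameters, and the hypothesis $\lambda(c)\neq0$ has to be located inside this computation (e.g.\ through the surviving $\gamma$-dependent coefficient of $v_\lambda$ in $x^+_{-n}x^-_n v_\lambda$), not only in the nondegeneracy of $[h_l,h_{-l}]$ where your sketch places it. As written, the proposal is a correct plan whose decisive lemma is asserted rather than proved.
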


Suppose now that $\lambda(c)=0$. Then $\gamma^{\pm \frac{1}{2}}$ acts on $M_q(\lambda)$ by $1$.  Consider the ideal
$J^q(\lambda)$ of  $U_q$ generated by $I^q(\lambda)$ and  $a(l)$ for
all $l$. Denote
$$\tilde{M}_q(\lambda)=U_q/J^q(\lambda).$$
Then $\tilde{M}_q(\lambda)$ is a homomorphic image of
 $M_q(\lambda)$ which we call the \emph{reduced quantized imaginary Verma
 module}. The module $\tilde{M}_q(\lambda)$ has a $\Lambda$-gradation:
 $$\tilde{M}_q(\lambda)=\sum_{\xi\in\Lambda}\tilde{M}_q(\lambda)_{\xi}.$$
 
 \begin{thm}[\cite{CFM10}]
Let $\lambda\in \Lambda$ be such that $\lambda(c)=0$. Then module $\tilde{M}_q(\lambda)$
is simple if and only if $\lambda(h)\neq 0$.
\end{thm}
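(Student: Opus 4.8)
The plan is to mimic the classical argument for irreducibility of Verma-type modules, working with the $\Lambda$-gradation of $\tilde M_q(\lambda)$ and the structure of $\tilde M_q(\lambda)$ as a free module over the ``negative'' subalgebra $\mathcal N_q^-$ generated by the $x^-_k$ (with $\gamma^{\pm 1/2}$ acting by $1$). First I would recall that, as a consequence of the defining ideal $J^q(\lambda)$, the module $\tilde M_q(\lambda)$ is isomorphic as a vector space to $\mathcal N_q^-$ via $u\mapsto u\cdot v_\lambda$, where $v_\lambda$ is the highest weight vector; this follows from the triangular decomposition of $U_q$ adapted to the nonstandard partition $S$, exactly as in the classical reduced imaginary Verma module setup of Section~5. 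In particular $\tilde M_q(\lambda)$ has one-dimensional top weight space $\mathbb F(q^{1/2})v_\lambda$ and its weights all lie in $\lambda - \mathbb Z_{>0}\alpha_1 + \mathbb Z\delta$ below the top.

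Next I would prove the ``only if'' direction: if $\lambda(h)=0$, exhibit a proper nonzero submodule. The natural candidate is the submodule generated by $x^-_0 v_\lambda$ (or an appropriate Drinfeld generator image), and one checks that when $K_1$ acts by $q^{\lambda(h)}=1$ the relation $e_1 f_1 - f_1 e_1 = (K_1-K_1^{-1})/(q-q^{-1})$ forces $e_1 (x^-_0 v_\lambda) = 0$; since the other raising operators $x^+_k$ and $a(l)$ ($l>0$) also annihilate $x^-_0 v_\lambda$ by weight reasons and the commutation relations \eqnref{xcommutator}, the vector $x^-_0 v_\lambda$ is again an $S$-highest weight vector, generating a proper submodule (proper because it sits strictly below $v_\lambda$). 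Hence $\tilde M_q(\lambda)$ is reducible.

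For the ``if'' direction, assume $\lambda(h)\neq 0$ and let $N$ be a nonzero submodule. Since $N$ is $\Lambda$-graded, pick a nonzero homogeneous $w\in N$ of weight $\lambda - n\alpha_1 + k\delta$ with $n\geq 0$ minimal. The goal is to show $n=0$, i.e. $w\in \mathbb F(q^{1/2})v_\lambda$, so that $v_\lambda\in N$ and $N=\tilde M_q(\lambda)$. If $n>0$, apply raising operators: using the bracket relations for $[x^+_m, x^-_{n_1}\cdots x^-_{n_k}]$ — which in the reduced module, with $a(l)$ acting by $0$, reduce essentially to the action described via the $\Omega$-operators and the element $\psi(0)=K$, $\phi(0)=K^{-1}$ acting by $q^{\pm\lambda(h)}$ — one shows that some $x^+_m w$ is nonzero of strictly higher weight (the coefficient that appears is a nonzero multiple of $q^{\lambda(h)}-q^{-\lambda(h)}$, which is nonzero precisely because $\lambda(h)\neq 0$), contradicting minimality of $n$. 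This is where the hypothesis is used decisively, and it is the main obstacle: one must compute $x^+_m$ acting on a PBW monomial $x^-_{m_1}\cdots x^-_{m_n}v_\lambda$ carefully and check that the ``diagonal'' terms do not all cancel. I would organize this computation using the generating-function identity \eqnref{xx} together with $\gamma^{1/2}\mapsto 1$, $\psi(u)\mapsto K$, $\phi(u)\mapsto K^{-1}$ on $v_\lambda$, so that $[x^+(u), x^-(v)]$ acting through $v_\lambda$ produces $(q-q^{-1})^{-1}(q^{\lambda(h)}-q^{-\lambda(h)})\delta(u/v)$ plus lower-length terms, and then induct on $n$ as in the proof of \propref{symmetricform}. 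Once the contradiction is reached, $n=0$, and simplicity follows.
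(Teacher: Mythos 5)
First, note that the paper does not prove this theorem here: it is quoted from \cite{CFM10}, so there is no internal proof to compare against line by line. Judged on its own terms, your outline has the right shape, but one direction contains a genuine gap and the other a small error. In the ``only if'' direction, your claim that the $a(l)=h_l$ ($l>0$) annihilate $x^-_0v_\lambda$ is false: by \eqnref{axcommutator}, $h_l\,x^-_0v_\lambda=-\tfrac{[2l]}{l}\gamma^{-l/2}x^-_lv_\lambda\neq 0$, so $x^-_0v_\lambda$ is \emph{not} an $S$-highest weight vector. The conclusion survives, but by a slightly different argument: when $\lambda(h)=0$ every $x^-_kv_\lambda$ is killed by all $x^+_m$ (the commutator contributes $\delta_{m+k,0}[\lambda(h)]v_\lambda=0$, the higher modes of $\psi,\phi$ involve $h_j$ which kill $v_\lambda$), so $U({\mathfrak g}_+^{(S)})_q$ maps $x^-_0v_\lambda$ into $\sum_k\mathbb F(q^{1/2})x^-_kv_\lambda$, and by the triangular decomposition the submodule it generates misses $v_\lambda$ entirely; hence it is proper.

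The real gap is in the ``if'' direction, exactly where you flag ``the main obstacle.'' Applying $x^+(u)$ to $Pv_\lambda$ with $P$ a length-$n$ element of $\mathcal N_q^-$ and pushing $\psi,\phi$ to the right gives, up to the factor $(q-q^{-1})^{-1}$, the combination $\bigl(q^{\lambda(h)}\Omega_\psi(u)(P)-q^{-\lambda(h)}\Omega_\phi(u)(P)\bigr)v_\lambda$, not a clean multiple of $q^{\lambda(h)}-q^{-\lambda(h)}$: the coefficient $[\lambda(h)]$ appears only for the length-one (fully diagonal) terms, because for $n\geq 2$ the operators $\Omega_\psi$ and $\Omega_\phi$ differ through the factors $G_l^{1/q}$ versus $G_l^q$. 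For a general $w$ in an infinite-dimensional weight space (an arbitrary finite linear combination of PBW monomials), the assertion that these contributions do not all cancel for every $m$ is precisely the substantive content of the theorem; it is equivalent to the simplicity of $\mathcal N_q^-$ as a $\mathcal K_q$-module, or to the nondegeneracy of the form $(\ ,\ )$ of \propref{form}, which is what \cite{CFM10} (and Section~4 here) actually establishes. Your proposal defers exactly this point. To close it you should either invoke that result explicitly, or argue via extremal modes: for $m$ in a suitable range only the $\Omega_\psi$ term survives (the $\Omega_\phi$ contributions vanish by support considerations analogous to \eqnref{omegalocalfin}), and then $\Omega_\psi(m)(P)=0$ for all $m$ forces $(x_{\mathbf m},P)=0$ for all monomials of positive length, whence $P\in\mathbb F(q^{1/2})$ by nondegeneracy, contradicting $n\geq 1$.
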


\section{The category $\mathcal O^q_{\text{red,im}}$}  Consider the set 
$$
\mathfrak h^*_{red}:=\{\lambda\in\mathfrak h^*\,|\, \lambda(c)=0,\lambda (h)\neq 0\}.
$$
The category $\mathcal O^q_{\text{red,im}}$ has as objects $U_q(\hat{\mathfrak g})$-modules $M$ such that 
there exists $\lambda_i\in \mathfrak h^*_{red}$, $i\in I$,  with 
$$
M\cong \bigoplus_{i\in I}\tilde M_q(\lambda_i).
$$
The morphisms in the category are just $U_q(\hat{\mathfrak g})$-module homomorphisms. 
Since $\bar M_q(\lambda)$ is a quantization of $\bar M(\lambda)$ in the sense of Lusztig, modules in  $\mathcal O^q_{\text{red,im}}$ are quantizations of modules in $\mathcal O_{\text{red,im}}$.
So equivalently the category $\mathcal O^q_{\text{red,im}}$ can be defined as follows:

Let $G_q$ be the quantized Heisenberg subalgebra generated by $h_k, k\in \mathbb Z\setminus \{0\}$ and $\gamma$. We say that a nonzero $U_q(\hat{\mathfrak g})$-module $V$ is $G_q$-compatible if 
\begin{enumerate}[i).]
\item  \label{1}  $V$ has a decomposition $V=TF(V)\oplus T(V)$ into a sum of nonzero $G_q$-submodules such that
\item  \label{2} 
 $G_q$ is bijective on $TF(V)$ (that any nonzero element $g\in G_q$ is a bijection on $TF(V)$) and $TF(V)$ has no nonzero $U_q(\mathfrak g)$-submodule, 
 \item \label{3}     $G_q\cdot T(V)=0$. 
\end{enumerate}
The category $\mathcal O^q_{\text{red,im}}$ has as objects $U_q(\hat{\mathfrak  g})$-modules $M$ such that 
\begin{enumerate}
\item $$
M=\bigoplus_{\nu\in\mathfrak h^*_{red}}M_\nu,\quad\text{ where }\quad M_\nu=\{m\in M\,|\, Km=K^{\nu(h)}m,\enspace Dm=q^{\nu(d)}m\},
$$
\item $x^+_n$, $n\in\mathbb Z$ act locally nilpotently,
\item $M$ is $G_q$-compatible.

\end{enumerate}

If $M\in\mathcal O^q_{\text{red,im}}$, we can write $M=\oplus_i\tilde M_q(\lambda_i)$ with $\tilde M_q(\lambda_i)=\oplus \mathbb F(q^{1/2})x^-_{n_1}\cdots x^-_{n_k}v_{\lambda_i}$.  We define $\tilde\Omega_\psi(m)$ and $\tilde x_m^-$ on each $\tilde M_q(\lambda_i)$ as in \eqnref{definingomegapsi}: 
\begin{align}
\tilde\Omega_\psi(m)(x^-_{n_1}\cdots x^-_{n_k}v_{\lambda_i})&:=\Omega_\psi(m)(x^-_{n_1}\cdots x^-_{n_k})v_{\lambda_i} \\
\tilde x_m^-(x^-_{n_1} \cdots x^-_{n_k}v_{\lambda_i})&:=  x_m^-x^-_{n_1}\cdots x^-_{n_k}v_{\lambda_i}.
\end{align}
Hence the following result follows.
  \begin{thm} The operators $\tilde\Omega_\psi(m)$ and $\tilde x_m^-$ are well defined on objects in the category $\mathcal O^q_{\text{red,im}}$. Moreover on each summand $\tilde M_q(\lambda_i)\cong \mathcal N_q^-$ they agree with the $\Omega_\psi(m)$ respectively left multiplication by $x_m^-$ defined as in \eqnref{definingomegapsi}. 
\end{thm}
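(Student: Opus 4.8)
The plan is to reduce the statement to two facts already in place: first, that $\Omega_\psi(m)\colon\mathcal N_q^-\to\mathcal N_q^-$ and left multiplication by $x_m^-$ are genuinely well-defined operators on $\mathcal N_q^-$ --- equivalently, that the generating-function formulas \eqnref{definingomegapsi}, \eqnref{omegapsi6} and the commutation rules of \propref{commutatorprop} are consistent with the Drinfeld--Serre relations \eqnref{Serre} (this is \cite{CFM10}); and second, that for $\lambda(c)=0$ the reduced quantized imaginary Verma module $\tilde M_q(\lambda)$ is free of rank one over $\mathcal N_q^-$, so the assignment $x^-_{n_1}\cdots x^-_{n_k}\mapsto x^-_{n_1}\cdots x^-_{n_k}v_{\lambda}$ extends to the vector-space isomorphism $\mathcal N_q^-\xrightarrow{\ \sim\ }\tilde M_q(\lambda)$ already used in the statement. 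I would first handle one summand: define $\tilde\Omega_\psi(m)$ on $\tilde M_q(\lambda)$ by transporting $\Omega_\psi(m)$ through that isomorphism, and likewise $\tilde x_m^-$ by transporting $(x_m^-\,\cdot\,)$. Evaluating the transported operators on a monomial $x^-_{n_1}\cdots x^-_{n_k}v_\lambda$ reproduces exactly the formulas written in the statement, so the only thing to check --- that those formulas respect the Serre relations among the spanning monomials $x^-_{n_1}\cdots x^-_{n_k}v_\lambda$ --- follows because the analogous formulas do so on $\mathcal N_q^-$ together with freeness. (For $\tilde x_m^-$ even less is needed: $x_m^-$ lies in $U_q$ under the Drinfeld isomorphism, so $\tilde x_m^-$ is just the module action of $x_m^-$, hence automatically well-defined, and freeness identifies it with left multiplication on the summand.)

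Next I would pass to an arbitrary $M\in\mathcal O^q_{\text{red,im}}$. By definition $M\cong\bigoplus_{i\in I}\tilde M_q(\lambda_i)$; fix such a decomposition and define $\tilde\Omega_\psi(m)$, $\tilde x_m^-$ summand by summand via the previous step. The one substantive point is independence of the decomposition, and I would establish it by exhibiting a canonical model. First, $T(M)$ is intrinsic to $M$: since every nonzero $g\in G_q$ is bijective on $TF(M)$ and zero on $T(M)$, one has $T(M)=\bigcap_{0\neq g\in G_q}\ker(g|_M)$, which under any decomposition equals $\bigoplus_i\mathbb F(q^{1/2})v_{\lambda_i}$. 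Summing the rank-one freeness of the summands then shows the multiplication map $\mathcal N_q^-\otimes_{\mathbb F(q^{1/2})}T(M)\to M$ is a vector-space isomorphism depending only on $M$, and under it $\tilde\Omega_\psi(m)=\Omega_\psi(m)\otimes\operatorname{id}_{T(M)}$ and $\tilde x_m^-=(x_m^-\,\cdot\,)\otimes\operatorname{id}_{T(M)}$. This is manifestly independent of the chosen index set and of the chosen highest-weight vectors --- here one uses that $\Omega_\psi(m)$ and $(x_m^-\,\cdot\,)$ are \emph{linear}, so the identification is insensitive to a change of basis of $T(M)$. Restricting to a one-dimensional $T(M)$ recovers the single-summand case, which is precisely the asserted agreement on each $\tilde M_q(\lambda_i)\cong\mathcal N_q^-$.

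The expected main obstacle is thus not analytic but organizational: it is to pin down the canonical identification $M\cong\mathcal N_q^-\otimes_{\mathbb F(q^{1/2})}T(M)$, i.e. the intrinsic characterization of $T(M)$ together with the rank-one freeness of each $\tilde M_q(\lambda_i)$ over $\mathcal N_q^-$, and to invoke correctly from \cite{CFM10} the compatibility of the $\Omega_\psi$-relations \eqnref{omegapsi6}, \eqnref{psipsi}, \eqnref{omegapsi3} with the Drinfeld relation \eqnref{Serre}. Granting these, the theorem follows at once --- which is exactly why it is stated here as an immediate consequence of the construction of $\tilde\Omega_\psi(m)$ and $\tilde x_m^-$ rather than with a separate argument.
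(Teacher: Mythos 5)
Your proposal is correct and follows essentially the route the paper intends: the paper gives no explicit argument (it states the theorem as an immediate consequence, ``Hence the following result follows''), relying exactly on the two facts you isolate --- well-definedness of $\Omega_\psi(m)$ and of left multiplication by $x_m^-$ on $\mathcal N_q^-$, and the rank-one freeness $\tilde M_q(\lambda_i)\cong\mathcal N_q^- v_{\lambda_i}$ used to transport them summand by summand. Your additional care about independence of the chosen decomposition, via the intrinsic characterization of $T(M)$ as the common kernel of the nonzero elements of $G_q$, is a sensible refinement of a point the paper leaves implicit, not a different method.
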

\section{Imaginary $\mathbb A$-lattices and imaginary crystal basis}  Let $\mathbb A_0$ (resp. $\mathbb A_\infty$) to be the ring of rational functions in $q^{1/2}$ with coefficients in a field $\mathbb F$ of characteristic zero, regular at $0$ (resp. at $\infty$).  
 Let $\mathbb A= \mathbb F[q^{1/2},q^{-1/2}, \frac{1}{[n]_{q}}, n>1]$, and $P=\{-k\alpha+m\delta\,|\,k>0,m\in\mathbb Z\}\cup\{0\}$.    Let $M$ be a $U_q(\hat{\mathfrak g})$-module in the category. We call a free $\mathbb A_0$-submodule $\mathcal L$ of $M$ an {\it imaginary crystal $\mathbb A_0$-lattice} of $M$ if the following hold
\begin{enumerate}[(i).]
\item $\mathbb F(q^{1/2})\otimes_{\mathbb A_0}\mathcal L\cong M$\label{lattice1},
\item $\mathcal L=\oplus_{\lambda \in P}\mathcal L_\lambda$ and $\mathcal L_\lambda =\mathcal L\cap \mathcal M_\lambda$,
\item $\tilde\Omega_\psi(m)\mathcal L\subseteq \mathcal L$ and $\tilde x^-_m\mathcal L\subseteq \mathcal L$ for all $m\in\mathbb Z$.
\end{enumerate}

We now show that the above definition is not vacuous.  Let $\lambda\in \mathfrak h^*$ and define
\begin{align*}
\mathcal L(\lambda):&=\sum_{k\geq 0\atop  i_1\geq \dots\geq i_k, i_j\in\mathbb Z}\mathbb Ax^-_{i_1}\cdots x^-_{i_k}v_\lambda\subset \mathcal N_q^-v_\lambda=\tilde M_q(\lambda)
\end{align*}
and also operators $\tilde \Omega_\psi(m):\tilde M_q(\lambda)\to \tilde M_q(\lambda)$ and $\tilde x_m^-:\tilde M_q(\lambda)\to\tilde M_q(\lambda)$ where $\tilde x^-_m$ is the left multiplication  operator by $x_m^-$ and $\tilde \Omega_\psi(m)(x^-_{i_1}\cdots x^-_{i_k}v_\lambda):=\Omega_\psi(m)(x^-_{i_1}\cdots x^-_{i_k})v_\lambda$ for $i_1\geq \cdots \geq i_k$.

For $\mu =\lambda-k\alpha+m\delta$, 
\begin{align*}
\tilde M_q(\lambda)_\mu=\begin{cases}\bigoplus_{\sum_{j=1}^ki_j=m,i_1\geq \cdots \geq  i_k}\mathbb Q(q^{1/2})x^-_{i_1}\cdots x^-_{i_k}v_\lambda&\quad \text{ if }k>0,  \\
\mathbb Q(q^{1/2}) v_\lambda&\quad\text{if }k=0
\end{cases}
\end{align*}
Now observe \eqnref{lattice1} is satisfied for $\mathcal L=\mathcal L(\lambda)$ as well as 
\begin{enumerate}
\item for $\mathcal L(\lambda)_\mu:=\mathcal L(\lambda)\cap \tilde M_q(\lambda)_\mu$ one has $\mathcal L(\lambda)=\bigoplus_{\lambda \in P}\mathcal L(\lambda)_{\mu}$, 
and
\item \begin{align*}
\tilde x^-_m\mathcal L(\lambda)\subseteq \mathcal L(\lambda),  \qquad \text{and}\qquad
\tilde\Omega_\psi(m)\mathcal L(\lambda)\subseteq \mathcal L(\lambda)
\end{align*}
where first statement follows from \eqnref{Serre1} and \eqnref{Serre2} and the last statement follows from \eqnref{Serre1}, \eqnref{Serre2}, \eqnref{omegapsi6} and the fact that $g_q(r)\in\mathbb A$ for $r\in\mathbb Z$ by 
\eqnref{grcomp}.
Thus $\mathcal L(\lambda)$ is an imaginary crystal lattice. 
\begin{prop}  
$$
\mathcal L(\lambda)=\left\{u\in \tilde M_q(\lambda)\,|\, (u,\tilde M_q(\lambda))\subset \mathbb A_0\right\}
$$
If $\mathbb F=\mathbb Q$, then 
$$
\mathcal L(\lambda)=\left\{u\in \tilde M_q(\lambda)\,|\, (u,u)\in \mathbb A_0\right\}
$$
\end{prop}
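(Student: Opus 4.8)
The plan is to characterize $\mathcal L(\lambda)$ intrinsically through the bilinear form $(\,,\,)$ on $\mathcal N_q^-$ transported to $\tilde M_q(\lambda)$, using the near-orthonormality established in \propref{symmetricform}. First I would set up notation: identify $\tilde M_q(\lambda)\cong \mathcal N_q^-$ as a vector space via $x^-_{i_1}\cdots x^-_{i_k}v_\lambda\leftrightarrow x^-_{i_1}\cdots x^-_{i_k}$, so that the form on $\tilde M_q(\lambda)$ is just the form from \propref{form}, and recall that the PBW monomials $x_{\mathbf i}=x^-_{i_1}\cdots x^-_{i_k}$ with $i_1\geq\cdots\geq i_k$ form a basis. \propref{symmetricform} says that within a fixed weight space these basis elements satisfy $(x_{\mathbf i},x_{\mathbf j})=\delta_{\mathbf i,\mathbf j}+(\text{element of }q^2\mathbb Z[\![q]\!])$, and that the form is block-diagonal with respect to length (and hence with respect to weight, since different weights force either different lengths or different index sums). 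So the Gram matrix of the PBW basis in each finite-dimensional weight space is of the form $\mathrm{Id}+q^2 N$ with $N$ having entries in $\mathbb Z[\![q]\!]\cap\mathbb A_0$ — in particular it is invertible over $\mathbb A_0$.

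Next I would prove the first equality. For $\supseteq$: if $u\in\tilde M_q(\lambda)$ has $(u,\tilde M_q(\lambda))\subset\mathbb A_0$, write $u=\sum a_{\mathbf i}x_{\mathbf i}$ with $a_{\mathbf i}\in\mathbb F(q^{1/2})$; working weight space by weight space, the vector $(a_{\mathbf i})$ satisfies $G\cdot(a_{\mathbf i})\in\mathbb A_0^{\,\dim}$ where $G=\mathrm{Id}+q^2N$ is the Gram matrix, and since $G^{-1}$ also has entries in $\mathbb A_0$ (by the geometric series $G^{-1}=\sum_{r\geq 0}(-q^2N)^r$, convergent $q$-adically, with all partial data in $\mathbb A_0$), we get $(a_{\mathbf i})\in\mathbb A_0^{\,\dim}$; but a priori we only know $a_{\mathbf i}\in\mathbb F(q^{1/2})$, so I should instead pair $u$ against individual PBW basis vectors $x_{\mathbf j}$ to get $\sum_{\mathbf i}a_{\mathbf i}(x_{\mathbf i},x_{\mathbf j})\in\mathbb A_0$ for all $\mathbf j$, i.e. $G^{T}(a_{\mathbf i})\in\mathbb A_0^{\,\dim}$, and then invert. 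This shows $a_{\mathbf i}\in\mathbb A_0$, but I actually need $a_{\mathbf i}\in\mathbb A$; here I use that $g_q(r)\in\mathbb A$ and that the whole computation in \propref{symmetricform} and \eqnref{omegapsi6} takes place over $\mathbb A$, so in fact $G$ and $G^{-1}$ have entries in $\mathbb A$, not merely $\mathbb A_0$, giving $a_{\mathbf i}\in\mathbb F(q^{1/2})\cap(\mathbb A_0\cdot\mathbb A)$. Care is needed to conclude $a_{\mathbf i}\in\mathbb A$; I would argue that $G\in GL_{\dim}(\mathbb A)$ with $\det G\equiv 1\bmod q^2$, so $G^{-1}\in GL_{\dim}(\mathbb A_0)$ as well, hence from $G^{T}(a)\in\mathbb A_0^{\,\dim}$ we get $a\in\mathbb A_0^{\,\dim}$; and the reverse containment $\subseteq$ is immediate since $\mathcal L(\lambda)$ is spanned over $\mathbb A$ by PBW monomials and $(x_{\mathbf i},x_{\mathbf j})\in\mathbb A_0$ always. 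The subtlety of reconciling $\mathbb A$ versus $\mathbb A_0$ coefficients — the lattice $\mathcal L(\lambda)$ is defined with $\mathbb A$-coefficients but the characterization produces $\mathbb A_0$-coefficients — is, I expect, the main obstacle, and the resolution is that $\mathbb A\subset\mathbb A_0$ and the Gram matrix being unipotent mod $q^2$ means the two notions of integrality coincide on the relevant span; I would make this precise by noting $\mathcal L(\lambda)_\mu$ is a free $\mathbb A$-module of rank $\dim\tilde M_q(\lambda)_\mu$ with Gram matrix in $GL(\mathbb A)$, so its $\mathbb A_0$-span and $\mathbb A$-span agree with the dual lattice.

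Finally, for the characterization $\mathcal L(\lambda)=\{u\mid(u,u)\in\mathbb A_0\}$ when $\mathbb F=\mathbb Q$: the inclusion $\subseteq$ is clear from the first part. For $\supseteq$, suppose $(u,u)\in\mathbb A_0$; decompose $u=\sum_\mu u_\mu$ into weight components, so $(u,u)=\sum_\mu(u_\mu,u_\mu)$, and since distinct weight spaces are orthogonal and each $(u_\mu,u_\mu)\in\mathbb Q(q^{1/2})$, I need to see that a sum of squares-of-norms lying in $\mathbb A_0$ forces each summand into $\mathbb A_0$. Write $u_\mu=\sum a_{\mathbf i}x_{\mathbf i}$; then $(u_\mu,u_\mu)=(a)^{T}G(a)$ with $G\equiv\mathrm{Id}\bmod q^2$ positive-definite-like in the sense that $(a)^TG(a)\equiv\sum a_{\mathbf i}^2\bmod q^2$. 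The key point is that over $\mathbb Q$, if $\sum_{\mathbf i}a_{\mathbf i}^2$ had a pole at $q^{1/2}=0$ of order $2\ell$ with leading coefficient $\sum(\text{leading coeff of }a_{\mathbf i})^2>0$ (a strictly positive rational, since a nonzero sum of squares of rationals is positive), that pole cannot cancel against the $q^2$-correction terms or against contributions from other weight spaces (which would only add more nonnegative-leading-term poles of even order), so $(u,u)\in\mathbb A_0$ forces every $a_{\mathbf i}$ to be regular at $0$, i.e. $u_\mu\in\mathcal L(\lambda)_\mu$. I would present this positivity/leading-term argument carefully, as it is where the hypothesis $\mathbb F=\mathbb Q$ (more precisely, that $\mathbb F$ admits an embedding into $\mathbb R$, or at least that sums of nonzero squares are nonzero) is genuinely used; I expect this to be the second main technical point after the $\mathbb A$-versus-$\mathbb A_0$ reconciliation.
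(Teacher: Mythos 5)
Your argument for the first equality is correct in substance and rests on the same engine as the paper's, namely the orthonormality modulo $q^2$ from \propref{symmetricform}; the mechanics differ in that the paper clears denominators to find the least $n\geq 0$ with $q^{n/2}u\in\mathcal L(\lambda)$ and then uses non-degeneracy of the form modulo $q^2\mathcal L(\lambda)$ to contradict minimality, whereas you invert the Gram matrix $G=\mathrm{Id}+q^2N$ over $\mathbb A_0$ directly. Your version makes the key point (unipotence of $G$ mod $q^2$ forces $G\in GL(\mathbb A_0)$) more explicit than the paper's rather terse descent, but two cautions are in order. First, the weight spaces $\tilde M_q(\lambda)_{\lambda-k\alpha+m\delta}$ are \emph{infinite}-dimensional (the paper says so explicitly), so "the Gram matrix in each finite-dimensional weight space" is not available as stated; you must instead pair $u$ only against the finitely many $x_{\mathbf j}$ with $\mathbf j$ in the support of $u$, which yields a finite unipotent system and saves the argument. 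Second, your resolution of the $\mathbb A$-versus-$\mathbb A_0$ tension via "$\mathbb A\subset\mathbb A_0$" is false as written, since $q^{-1/2}\in\mathbb A$ but $q^{-1/2}\notin\mathbb A_0$; this is really a defect of the paper's own definitions (with $\mathcal L(\lambda)$ the literal $\mathbb A$-span, $q^{-1/2}v_\lambda$ pairs to $q^{-1/2}\notin\mathbb A_0$ and the proposition is false on its face), and what your proof actually characterizes is the $\mathbb A_0$-span of the PBW monomials, which is surely what is intended. Finally, your positivity argument for the second equality when $\mathbb F=\mathbb Q$ --- orthogonality of distinct weight spaces, $(u,u)\equiv\sum_{\mathbf i}a_{\mathbf i}^2$ up to terms of strictly smaller pole order, and the fact that a nonzero sum of squares of rational leading coefficients cannot cancel --- is sound and is a genuine addition: the paper's proof establishes only the first equality and is silent on the second.
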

\begin{proof}  Let $R$ denote the right hand side of the above equality. 
We have the inclusion $\mathcal L(\lambda)\subseteq R$ by \propref{symmetricform}.  For the other inclusion let $u\in R$ and by clearing denominators we can find a smallest $n\geq  0$ such that $q^{n/2}u\in \mathcal L(\lambda)$.  If $n>1$ then
$$
(q^{n/2}u,\tilde M_q(\lambda))\equiv 0\mod q^{n/2}\mathbb A_0.
$$
By \propref{symmetricform} $(\enspace,\enspace)$ is non-degenerate modulo $q^2\mathcal L(\lambda)$, we must have $q^{n/2}u\equiv 0\mod q^2\mathcal L(\lambda)$. Hence $q^{(n/2)-2}u\in \mathcal L(\lambda)$ which contradicts the minimality of $n$.  Thus $u\in \mathcal L(\lambda)$.  
\end{proof}
\end{enumerate}

For $\lambda \in\mathfrak h^*$ define 
$$
\mathcal B(\lambda):=\left\{\tilde x_{i_1}^-\cdots \tilde x_{i_k}^-v_\lambda +q\mathcal L(\lambda)\in \mathcal L(\lambda)/q\mathcal L(\lambda)\,|\, i_1\geq \cdots \geq  i_k\right\}.
$$

An {\it imaginary crystal basis} of a $U_q(\hat{\mathfrak g})$-module $M$ in the category $\mathcal O^q_{\text{red,im}}$ is a pair $(\mathcal L,\mathcal B)$  satisfying
\begin{enumerate}[(i).]
\item $\mathcal L$ is an imaginary crystal lattice of $M$,
\item $\mathcal B$ is an $\mathbb F$-basis of $\mathcal L/q\mathcal L\cong \mathbb F\otimes_{\mathcal A_0}\mathcal L$,
\item $\mathcal B=\cup_{\mu \in P}\mathcal B_\mu$, where $\mathcal B_\mu =\mathcal B\cap (\mathcal L_\mu/q\mathcal L_\mu)$, 
\item $\tilde x_m^-\mathcal B\subset\pm \mathcal B\cup\{0\}$ and $\tilde \Omega_\psi\mathcal B\subset  \pm\mathcal B\cup \{0\}$,
\item For $m\in\mathbb Z$, if $ \Omega_\psi(-m)b\neq 0$ and $\tilde x_m^-b\neq0$ for $b\in \mathcal B$, then $\tilde x_m^-\tilde \Omega_\psi(-m)b=\tilde \Omega_\psi(-m)\tilde x_m^-b$.  .
\end{enumerate}

\begin{thm}  For $\lambda\in \hat{\mathfrak h}^*_{red,im}$, the pair $(\mathcal L(\lambda),\mathcal B(\lambda))$ is an imaginary crystal basis of the reduced imaginary Verma module $\tilde M_q(\lambda)$.  
\end{thm}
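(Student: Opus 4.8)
The plan is to verify, one at a time, the five conditions (i)--(v) defining an imaginary crystal basis for the pair $(\mathcal L(\lambda),\mathcal B(\lambda))$; the non-formal ingredients will be the quantum Serre relations \eqnref{Serre1}--\eqnref{Serre2}, the commutation formula \eqnref{omegapsi6}, the explicit coefficients \eqnref{grcomp}, and the fact that $\gamma$ acts by $1$ on $\tilde M_q(\lambda)$ because $\lambda(c)=0$. Conditions (i)--(iii) are essentially already in hand: the discussion preceding the theorem shows $\mathcal L(\lambda)$ is an imaginary crystal lattice; since $\mathcal L(\lambda)$ is by construction the free lattice on the ordered PBW monomials $x^-_{i_1}\cdots x^-_{i_k}v_\lambda$ with $i_1\geq\cdots\geq i_k$, the images of these monomials form an $\mathbb F$-basis of $\mathcal L(\lambda)/q\mathcal L(\lambda)$, and that basis is precisely $\mathcal B(\lambda)$; and since each such monomial is a weight vector of weight $\lambda-k\alpha+(\textstyle\sum_j i_j)\delta$, the decomposition $\mathcal B(\lambda)=\bigcup_{\mu\in P}\mathcal B(\lambda)_\mu$ is immediate. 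So the real work is (iv) and (v).

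For the $\tilde\Omega_\psi$ half of (iv) I would iterate the component form of \eqnref{omegapsi6}. With $\gamma=1$ and using \eqnref{grcomp} (so $g_{q^{-1}}(1)=q^4-1\equiv-1$ while $g_{q^{-1}}(r)\in q^2\mathbb A$ for $r\ne 1$), one gets the congruence of operators preserving $\mathcal L(\lambda)$,
\[
\tilde\Omega_\psi(-m)\,\tilde x^-_n\ \equiv\ \delta_{m,n}\ -\ \tilde x^-_{n+1}\,\tilde\Omega_\psi(-m-1)\pmod{q^2\mathcal L(\lambda)}.
\]
Applying this $k$ times to $x^-_{i_1}\cdots x^-_{i_k}v_\lambda$ and using $\tilde\Omega_\psi(\,\cdot\,)v_\lambda=0$ collapses $\tilde\Omega_\psi(-m)(x^-_{i_1}\cdots x^-_{i_k}v_\lambda)$ modulo $q\mathcal L(\lambda)$ to $\sum_{j=1}^{k}(-1)^{j-1}\delta_{m+j-1,\,i_j}\,x^-_{i_1+1}\cdots x^-_{i_{j-1}+1}x^-_{i_{j+1}}\cdots x^-_{i_k}v_\lambda$. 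Because $l\mapsto i_l-(m+l-1)$ is strictly decreasing, at most one Kronecker delta survives; and when the relevant index $j_0$ exists the surviving monomial is again of PBW type, since $i_{j_0-1}+1\ge i_{j_0}+1>i_{j_0}\ge i_{j_0+1}$. Hence $\tilde\Omega_\psi(m)\mathcal B(\lambda)\subseteq\pm\mathcal B(\lambda)\cup\{0\}$.

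For the $\tilde x^-_m$ half of (iv) I would straighten $x^-_m x^-_{i_1}\cdots x^-_{i_k}$ into PBW order using \eqnref{Serre1}--\eqnref{Serre2}. The governing observations are that every application of these relations has coefficients in $\mathbb Z[q^2]$ and never increases the largest index occurring. So, pushing $x^-_m$ (current index $p$) past the next factor $x^-_{i_j}$, exactly one of three things happens: $p\ge i_j$ (the factor is already in place, so we stop); $p=i_j-1$, when \eqnref{Serre1} produces a global factor $q^2$; or $p\le i_j-2$, when \eqnref{Serre2} splits off two terms carrying a $q^2$ and leaves one ``main'' term $-x^-_{i_j-1}x^-_{p+1}(\cdots)$ with which we continue, now with $i_j\mapsto i_j-1$ and $p\mapsto p+1$. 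All side terms keep their $q^2$ under subsequent straightening and hence vanish modulo $q^2$, so the only possible surviving term is the main branch: writing $\phi(l)=i_l-(m+l-1)$, the result modulo $q\mathcal L(\lambda)$ is $0$ if some $\phi(l)=1$, and otherwise $(-1)^{j^*-1}x^-_{i_1-1}\cdots x^-_{i_{j^*-1}-1}x^-_{m+j^*-1}x^-_{i_{j^*}}\cdots x^-_{i_k}v_\lambda$, a single PBW monomial, where $j^*$ is the least index with $\phi(j^*)\le 0$ (with the evident modification, appending $x^-_{m+k}$, when all $\phi(l)\ge 2$). This gives $\tilde x^-_m\mathcal B(\lambda)\subseteq\pm\mathcal B(\lambda)\cup\{0\}$, and in particular $\tilde x^-_m b=0$ in $\mathcal L(\lambda)/q\mathcal L(\lambda)$ exactly when some $\phi(l)=1$. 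I expect this straightening analysis---termination, the claim that no side term ever contributes modulo $q^2$, and the sign bookkeeping---to be the main obstacle.

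Condition (v) then follows by combining the two descriptions. Taking $k=-m$ in \eqnref{omegapsi6}, with $\gamma=1$ and \eqnref{grcomp}, gives $\tilde\Omega_\psi(-m)\,\tilde x^-_m\equiv 1-\tilde x^-_{m+1}\,\tilde\Omega_\psi(-m-1)$ modulo $q^2\mathcal L(\lambda)$, hence $\tilde\Omega_\psi(-m)\,\tilde x^-_m b\equiv b-\tilde x^-_{m+1}\,\tilde\Omega_\psi(-m-1)b$ for $b=x^-_{i_1}\cdots x^-_{i_k}v_\lambda+q\mathcal L(\lambda)$. Now assume $\tilde\Omega_\psi(-m)b\ne 0$ and $\tilde x^-_m b\ne 0$. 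By the paragraph above, $\tilde x^-_m b\ne 0$ forces $\phi(l)\ne 1$ for all $l$; but $\tilde\Omega_\psi(-m-1)b\ne 0$ would require some $i_l=m+l$, i.e.\ $\phi(l)=1$, so $\tilde\Omega_\psi(-m-1)b=0$ and therefore $\tilde\Omega_\psi(-m)\,\tilde x^-_m b\equiv b$. On the other hand $\tilde\Omega_\psi(-m)b\ne 0$ yields a (necessarily unique) index $j_0$ with $\phi(j_0)=0$, and the computation above gives $\tilde\Omega_\psi(-m)b=(-1)^{j_0-1}c$ with $c=x^-_{i_1+1}\cdots x^-_{i_{j_0-1}+1}x^-_{i_{j_0+1}}\cdots x^-_{i_k}v_\lambda$ a PBW monomial; since $\phi$ is strictly decreasing, avoids $1$, and vanishes at $j_0$, inserting $x^-_m$ into $c$ passes its first $j_0-1$ factors and then stops---with sign $(-1)^{j_0-1}$---and returns exactly $b$. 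Hence $\tilde x^-_m\tilde\Omega_\psi(-m)b=(-1)^{2(j_0-1)}b=b\equiv\tilde\Omega_\psi(-m)\,\tilde x^-_m b\pmod{q\mathcal L(\lambda)}$, which is (v). Since $\lambda(h)\ne 0$ guarantees $\tilde M_q(\lambda)\in\mathcal O^q_{\text{red,im}}$, this would complete the proof that $(\mathcal L(\lambda),\mathcal B(\lambda))$ is an imaginary crystal basis of $\tilde M_q(\lambda)$.
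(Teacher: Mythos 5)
Your proposal is correct and follows essentially the same route as the paper: conditions (i)--(iii) are immediate from the construction of $\mathcal L(\lambda)$, the $\tilde x_m^-$ action on $\mathcal B(\lambda)$ is computed by straightening with \eqnref{Serre1}--\eqnref{Serre2} (your main-branch/side-term analysis reproduces the paper's formula \eqnref{xonbasis}), the $\tilde\Omega_\psi$ action is computed by iterating \eqnref{omegapsi6} with the coefficient estimates \eqnref{grcomp} (reproducing \eqnref{omegaonbasis}, including the argument that at most one Kronecker delta survives by monotonicity of the indices), and (v) follows by combining the two. Your bookkeeping via $\phi(l)=i_l-(m+l-1)$ is a slightly cleaner packaging of the same case analysis the paper carries out.
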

\begin{proof} Conditions (i)-(ii) are clear. For (iii) consider $b=\tilde x_{i_1}^-\cdots \tilde x_{i_k}^-v_\lambda +q\mathcal L(\lambda)$ with $i_1\geq i_2\geq \cdots \geq i_k$.  If $m\geq i_1$, then
$$
\tilde x_m^-b=\tilde x_m^-\tilde x_{i_1}^-\cdots \tilde x_{i_k}^-v_\lambda +q\mathcal L(\lambda)\in \mathcal B.
$$
If $m= i_1-1$, then by \eqnref{Serre1} we have
$$
\tilde x_m^-b=q^2\tilde x_m^-\tilde x_{i_1}^-\cdots \tilde x_{i_k}^-v_\lambda +q\mathcal L(\lambda)=0\mod q\mathcal L(\lambda).
$$
If $m<i_1-1$, ($l=m$ and $k+1=i_1$ so $k=i_1-1$) then by \eqnref{Serre2} we have
\begin{align} 
\tilde x_m^-b&\equiv-\tilde x_{i_1-1}^-\tilde x_{m+1}^-\tilde x_{i_2}^-\cdots \tilde x_{i_k}^-v_\lambda +q\mathcal L(\lambda)   
\end{align}
and $i_1-1\geq m+1$.  By induction this is either $0 \mod q\mathcal L(\lambda)$ if $i_j=m+j$ for some $j$ or in $\pm \mathcal B$.
To sum it up we have
\begin{align}\label{xonbasis}
\tilde x_m^-b&\equiv \begin{cases}
\tilde x_m^-\tilde x_{i_1}^-\cdots \tilde x_{i_l}^-v_\lambda +q\mathcal L(\lambda)&\text{ if }m\geq i_1\\
0 & \text{ if }m+j= i_j\quad \text{ for some }1\leq j\leq l ,\\
(-1)^{j-1}\tilde x_{i_1-1}^-\tilde x_{i_2-1}^- \cdots \tilde x_{m+j-1}^-\tilde x_{i_j}^-\cdots \tilde x_{i_l}^-v_\lambda +q\mathcal L(\lambda) & \text{ if }m+j>   i_j\text{ but }m+j-1< i_{j-1},\\  &\text{ for some }1\leq j\leq l.
\end{cases}  \\
&\equiv \begin{cases}
0 & \text{ if }m+j= i_j\quad \text{ for some }1\leq j\leq l ,\\
(-1)^{j-1}\tilde x_{i_1-1}^-\tilde x_{i_2-1}^- \cdots \tilde x_{m+j-1}^-\tilde x_{i_j}^-\cdots \tilde x_{i_l}^-v_\lambda +q\mathcal L(\lambda) & \text{ if }m+j\neq  i_j.
\end{cases}\notag
  \end{align}

Next we have
\begin{align}\label{omegaonbasis}
\tilde\Omega_\psi(k)\tilde x_{i_1}^-\cdots \tilde x_{i_l}^-v_\lambda&=\delta_{k,-i_1} \tilde x_{i_2}\cdots \tilde x_{i_l}^-v_\lambda+\sum_{r\geq 0}g_{q^{-1}}(r)\tilde x^-_{i_1+r}\tilde \Omega_\psi(k-r)\tilde x_{i_2}^-\cdots \tilde x_{i_l}^-v_\lambda  \\
&\equiv \delta_{k,-i_1} \tilde x_{i_2}\cdots \tilde x_{i_l}^-v_\lambda-\tilde x^-_{i_1+1}\tilde \Omega_\psi(k-1)\tilde x_{i_2}^-\cdots \tilde x_{i_l}^-v_\lambda  \mod q\mathcal L(\lambda) \notag \\
&\equiv \delta_{k,-i_1} \tilde x_{i_2}\cdots \tilde x_{i_l}^-v_\lambda-\sum_{j=2}^l(-1)^{j-2}\delta_{k-j+1,-i_j}\tilde x^-_{i_1+1}\tilde x_{i_2+1}^-\cdots   \tilde x_{i_{j-1}+1}^-\tilde x_{i_{j+1}}^-\cdots \tilde x_{i_l}^-v_\lambda  \mod q\mathcal L(\lambda)\notag \\
&\equiv\sum_{j=1}^l(-1)^{j-1}\delta_{k-j+1,-i_j}\tilde x^-_{i_1+1}   \tilde x_{i_2+1}^-\cdots   \tilde x_{i_{j-1}+1}^-\tilde x_{i_{j+1}}^-\cdots \tilde x_{i_l}^-v_\lambda  \mod q\mathcal L(\lambda)\notag
\end{align}
Observe that each summand on the right is ordered so that it is in $\pm \mathcal B
\cup \{0\}$.  The only way in which the whole summation is not in $\pm \mathcal B\cup \{0\}$ is if there are at least two indices $r<s$ such that $i_r=-k+r-1$ and $i_s=-k+s-1$.   But $i_r\geq i_s$ which is a contradiction.  

Condition (v) is satisfied by \eqnref{omegapsi6}.  Indeed we begin by induction.   
Now if $b=\tilde x_{i_1}^-v_\lambda+q\mathcal L(\lambda)$, then we have
\begin{align*}
\tilde \Omega_\psi(k)\tilde x^-_mb&=\delta_{k,-m}b+\sum_{r\geq 0}g_{q^{-1}}(r)\tilde x^-_{m+r}\Omega_\psi(k-r)\tilde x_{i_1}^-v_\lambda  \\
& =\delta_{k,-m}b-\tilde x^-_{m+1}\Omega_\psi(k-1)\tilde x_{i_1}^-v_\lambda  \\
& =\delta_{k,-m}b-\delta_{k-1,-i_1}\tilde x^-_{m+1}v_\lambda .
\end{align*}
Thus if $k=-m$ and $k-1=-i_1$, then $i_1=m+1$ which is a contradiction to the assumption $\tilde x_mb\neq 0$.  Hence $\tilde\Omega_\psi (-m)\tilde x_m^-b=b$. 

Now assuming $\tilde \Omega_\psi(-m)\tilde x_{i_1}^-v_\lambda\equiv \tilde \Omega_\psi(-m)b\neq 0$ by \eqnref{omegaonbasis} we have
\begin{align*}
\tilde \Omega_\psi(-m)\tilde x_{i_1}^-v_\lambda\equiv \delta_{m,i_1} v_\lambda  \mod q\mathcal L(\lambda)\notag
\end{align*}
and we must have $m=i_1$.   Thus $\tilde x_m^-\tilde \Omega_\psi(-m)\tilde x_{i_1}^-v_\lambda\equiv \tilde x_m^-v_\lambda=b$.

 Next take $b=\tilde x_{i_1}^-\cdots \tilde x_{i_k}^-v_\lambda$ with $i_1\geq i_2\geq \cdots\geq i_k$ and if $\tilde x_m^-b\neq 0$, then $i_j\neq m+j$ for all $1\leq j\leq l$ by \eqnref{xonbasis} and we first consider the case $m\geq i_1$: 
 \begin{align*}
\tilde \Omega_\psi(-m)\tilde x_m^-b&\equiv\tilde \Omega_\psi(-m)\tilde x_m^- \tilde x_{i_1}^-\cdots \tilde x_{i_k}^-v_\lambda \\
&\equiv  \tilde x_{i_1}^-\cdots \tilde x_{i_k}^-v_\lambda-  \tilde x_{m+1}^-\tilde \Omega_\psi(-m-1) \tilde x_{i_1}^-\cdots \tilde x_{i_k}^-v_\lambda
 \end{align*}
    If 
$\tilde\Omega_\psi(-m-1)b\neq 0$, then by \eqnref{omegaonbasis} $m+j=i_j$ for some $1\leq j\leq l$ but this contradicts $\tilde x_m^-b\neq 0$.   Hence 
$$
\tilde \Omega_\psi(-m)\tilde x_m^-b\equiv b
$$
for $m\geq i_1$.  
  
  On the other hand assuming $\tilde\Omega_\psi(-m)b\neq 0 $, by \eqnref{omegaonbasis} $m+j-1=i_j$ for some $1\leq j\leq l$, so that 
  \begin{align*}
  \tilde\Omega_\psi(-m)b\equiv (-1)^{j-1}\delta_{-m-j+1,-i_j}\tilde x^-_{i_1+1}   \tilde x_{i_2+1}^-\cdots   \tilde x_{i_{j-1}+1}^-\tilde x_{i_{j+1}}^-\cdots \tilde x_{i_l}^-v_\lambda  \mod q\mathcal L(\lambda)
  \end{align*}
  we have by \eqnref{xonbasis}
  \begin{align*}
\tilde x_m^-  \tilde\Omega_\psi(-m)b&\equiv (-1)^{j-1}\delta_{-m-j+1,-i_j}\tilde x_m^-\tilde x^-_{i_1+1}   \tilde x_{i_2+1}^-\cdots   \tilde x_{i_{j-1}+1}^-\tilde x_{i_{j+1}}^-\cdots \tilde x_{i_l}^-v_\lambda  \mod q\mathcal L(\lambda) \\
&\equiv \delta_{-m-j+1,-i_j} \tilde x^-_{i_1}   \tilde x_{i_2}^-\cdots   \tilde x_{i_{j-1}}^-\tilde x_{m+j-1}^-\tilde x_{i_{j+1}}^-\cdots \tilde x_{i_l}^-v_\lambda  \mod q\mathcal L(\lambda) \\
&\equiv   b.
  \end{align*}
  
  Finally if  $\tilde x_m^-b\neq 0 $, by \eqnref{xonbasis} $m+j\neq i_j$ and 
  \begin{align*}
  \tilde x_m^-b&=(-1)^{j-1}\tilde x_{i_1-1}^-\tilde x_{i_2-1}^- \cdots \tilde x_{m+j-1}^-\tilde x_{i_j}^-\cdots \tilde x_{i_l}^-v_\lambda +q\mathcal L(\lambda) 
  \end{align*}
  so that by \eqnref{omegaonbasis}
    \begin{align*}
  \Omega_\psi(-m)\tilde x_m^-b&=(-1)^{j-1} \Omega_\psi(-m)\tilde x_{i_1-1}^-\tilde x_{i_2-1}^- \cdots \tilde x_{m+j-1}^-\tilde x_{i_j}^-\cdots \tilde x_{i_l}^-v_\lambda +q\mathcal L(\lambda)  \\
  &= \tilde x_{i_1}^-\tilde x_{i_2}^- \cdots \tilde x_{i_j}^-\cdots \tilde x_{i_l}^-v_\lambda +q\mathcal L(\lambda)  \\
  &=b.
  \end{align*}

\end{proof}

\section*{Acknowledgement}
The first two authors would like to thank the Mittag-Leffler Institute for its hospitality during their stay where part of this work was done.  The first author was partially support by a Simons Collaboration Grant (\#319261). The second author was supported in part by the CNPq grant (301320/2013-6) and by the FAPESP grant (2014/09310-5). The third author was partially support by the Simons Foundation Grant \#307555. The authors would like to acknowledge the valuable comments of Masaki Kashiwara on earlier versions of this paper.

\bibliographystyle{alpha}
\bibliography{math}

\def\cprime{$'$}
\providecommand{\bysame}{\leavevmode\hbox to3em{\hrulefill}\thinspace}
\providecommand{\MR}{\relax\ifhmode\unskip\space\fi MR }
\providecommand{\MRhref}[2]{%
  \href{http://www.ams.org/mathscinet-getitem?mr=#1}{#2}
}
\providecommand{\href}[2]{#2}
\bibliographystyle{amsalpha}

\end{document}